\providecommand{\U}[1]{\protect\rule{.1in}{.1in}}
\newtheorem {theorem}{Theorem}[section]
\newtheorem {proposition}{Proposition}[section]
\newtheorem{lemma}{Lemma}[section]
\newtheorem{remark}{Remark}[section]
\newenvironment{proof}[1][Proof]{\textbf{#1.} }{\
\rule{0.5em}{0.5em}}
\newcommand{\eq}[1]{$(\ref{#1})$}
\newcommand{\ignore}[1]{}{}
\newcommand{\lbl}{\label}
\begin{document}

\begin{center}
{\Large Self-Normalized Moderate Deviations for Degenerate {\it U}-Statistics }

\bigskip

\centerline{\today}

\bigskip Lin Ge$^{a}$, Hailin Sang$^{b}$\footnote{This research is partially supported by the Simons Foundation Grant 586789, USA.}, Qi-Man Shao$^{c}$\footnote{This research is partially supported by the National Nature Science Foundation of China NSFC 12031005 and
Shenzhen Outstanding Talents Training Fund, China.}

\bigskip$^{a}$ Division of Arts and Sciences, Mississippi State University at Meridian,
Meridian, MS 39307, USA. E-mail address: lge@meridian.msstate.edu

\bigskip$^{b}$ Department of Mathematics, University of Mississippi,
University, MS 38677, USA. E-mail address: sang@olemiss.edu

\bigskip$^{c}$ Department of Statistics and Data Science, Shenzhen International Center for Mathematics,
Southern University of Science and Technology, Shenzhen, Guangdong 518055, P.R. China. E-mail address:
shaoqm@sustech.edu.cn
\end{center}
\begin{abstract}
In this paper, we study self-normalized moderate deviations for degenerate { $U$}-statistics of order $2$. Let  $\{X_i, i \geq 1\}$ be i.i.d. random variables and consider symmetric and degenerate kernel functions in the form $h(x,y)=\sum_{l=1}^{\infty} \lambda_l g_l (x) g_l(y)$,  where $\lambda_l > 0$,  $E g_l(X_1)=0$, and $g_l (X_1)$ is in the domain of attraction of a normal law for all $l \geq 1$. Under the condition $\sum_{l=1}^{\infty}\lambda_l<\infty$ and some truncated conditions for $\{g_l(X_1): l \geq 1\}$, we show that $ \text{log} P({\frac{\sum_{1 \leq i \neq j \leq n}h(X_{i}, X_{j})} {\max_{1\le l<\infty}\lambda_l V^2_{n,l} }} \geq x_n^2) \sim - { \frac {x_n^2}{ 2}}$ 
for $x_n \to \infty$ and $x_n =o(\sqrt{n})$, where $V^2_{n,l}=\sum_{i=1}^n g_l^2(X_i)$.  As application, a law of the iterated logarithm is also obtained.

\end{abstract}
\noindent Keywords:  moderate deviation, degenerate {\it U}-statistics,  law of the iterated logarithm, self-normalization 

\noindent  {\textit{MSC 2010 subject classification}: 60F15, 60F10, 62E20}

\numberwithin{equation}{section}
\section{Introduction and main results}
Last three decades has witnessed significant developments on self-normalized limit theory,
especially on large deviations, Cram\'er type moderate deviations, and the law of the iterated logarithm.
  Comparing with the classical limit theorems, these self-normalized limit theorems usually require much less moment assumptions.

Let $X, X_1, X_2, \dots$ be independent identically distributed (i.i.d.) random variables. Set
\begin{eqnarray*}
S_n=\sum_{i=1}^nX_i \ \  \textrm{and} \ \ V_n^2=\sum_{i=1}^n X_i^2.
\end{eqnarray*}

Griffin and Kuelbs \cite{GK} obtained a law of the iterated logarithm (LIL) for the self-normalized
sum of i.i.d. random variables with distributions in the domain of attraction of a
normal or stable law. They proved that
\begin{enumerate}
\item If $EX=0$ and $X$ is in the domain of attraction of a normal law, then
\begin{equation}
\limsup_{n\rightarrow \infty}\frac{S_n}{V_n (2\log\log n)^{1/2}}=1\;\;a.s. \lbl{0.1}
\end{equation}
\item If $X$ is symmetric and is in the domain of attraction of a stable law, then there exists a positive constant $C$ such that
\begin{equation}
\limsup_{n\rightarrow \infty}\frac{S_n}{V_n (2\log\log n)^{1/2}}=C\;\;a.s. \lbl{0.2}
\end{equation}
\end{enumerate}

Shao \cite{Shao97} obtained the following self-normalized moderate deviations and specified the constant $C$ in \eq{0.2}. Let $\{x_n, n\ge 1\}$ be a sequence of positive numbers such that $x_n\rightarrow \infty$ and $x_n=o(\sqrt{n})$ as $n\rightarrow \infty$.
\begin{enumerate}
\item If $EX=0$ and $X$ is in the domain of attraction of a normal law, then
\begin{equation}
\lim_{n \rightarrow \infty} x_n^{-2} \log P \left (  \frac{S_n}{V_n} \geq x_n\right ) = - \frac{1}{2}. \lbl{0.3}
\end{equation}
\item If $X$ is in the domain of attraction of a stable law such that $EX=0$ with index $1 < \alpha <2$, or $X_1$ is symmetric with index $\alpha=1$, then
\begin{equation*}
\lim_{n \rightarrow \infty} x_n^{-2} \log P \left (  \frac{S_n}{V_n} \geq x_n\right ) = - \beta(\alpha, c_1, c_2),
\lbl{0.4}
\end{equation*}
\end{enumerate}
where $\beta(\alpha, c_1, c_2)$ is a constant depending  on the tail distribution, see \cite{Shao97} for an explicit
definition.

Shao \cite{Shao99} refined \eq{0.3} and obtained the following Cram\'er type moderate deviation theorem under a finite third moment: If $EX=0$ and $E|X|^3 < \infty$, then
\begin{equation}\label{0.5}
 { P(S_n/V_n \geq x_n) \over P( Z\geq x_n)}
 \to 1
 \end{equation}
for any $x_n \in [0, o (n^{1/6}))$, where $Z$ is the standard normal random variable.

Jing, Shao and Wang \cite{JingShaoWang} further extended \eq{0.5} to general independent random variables under a Lindeberg type condition, while Shao and Zhou \cite{ShaoZhou} established the result for self-normalized non-linear statistics which include {\it U}-statistics as a special case.

The {\it U}-statistics were introduced by Halmos \cite {Halmos} and Hoeffding \cite {Hoeffding}. The LIL for nondegenerate {\it U}-statistics was obtained by Serfling \cite{S}. The LIL for degenerate {\it U}-statistics was studied by Dehling, Denker and Philipp (\cite{DDP1}, \cite{DDP2}), Dehling \cite{D},  Arcones and Gin\a'{e} \cite{AG}, Teicher \cite{T}, Gin\a'{e} and Zhang \cite{GZ}, and others.  Gin\a'{e}, Kwapie\a'{n}, Lata\l a and Zinn \cite{GKLZ} gave necessary and sufficient conditions for the LIL of degenerate {\it U}-statistics of order 2, which was extended to any order by Adamczak and  Lata\l a \cite{AL}.

The main purpose of this paper is to  study  the self-normalized moderate deviations and the LIL for
degenerate {\it U}-statistics of order $2$. Let
\begin{eqnarray*}
U_n = \frac{1}{n(n-1)} \sum_{1 \leq i \neq j \leq n} h(X_i, X_j) , \lbl{0.6}
\end{eqnarray*}
where
\begin{eqnarray} \label{1.2}
h(x,y)=\sum_{l=1}^{\infty} \lambda_l g_l (x) g_l(y).
\end{eqnarray}

A motivation example for the LIL is the one with the kernel $h(x,y)=xy$. Obviously, $V_n^2/(2V_n^2\log\log n)\rightarrow 0$. Then by \eq{0.1} and \eq{0.2}
\begin{enumerate}
	\item If $EX=0$ and $X$ is in the domain of attraction of a normal law, then
\begin{align}
&\limsup_{n\rightarrow \infty}\frac{1}{2V_n^2\log\log n}\bigg|\sum_{1 \leq i\ne j\le n}X_iX_j\bigg | \nonumber\\
&=\limsup_{n\rightarrow \infty}\left[\frac{S_n}{V_n (2\log\log n)^{1/2}}\right]^2=1\;\;a.s. \label {1.1aa}
\end{align}
\item If $X$ is symmetric and is in the domain of attraction of a stable law, then there exists a positive constant $C$ such that
\begin{align*}
&\limsup_{n\rightarrow \infty}\frac{1}{2V_n^2\log\log n}\left|\sum_{1 \leq i\ne j\le n}X_iX_j\right|\\
&=\limsup_{n\rightarrow \infty}\left[\frac{S_n}{V_n (2\log\log n)^{1/2}}\right]^2=C^2\;\;a.s.
\end{align*}
\end{enumerate}

\ignore{
In general, a symmetric and degenerate kernel can be written as
\begin{eqnarray*} \label {1.2}
h(x,y)=\sum_{l=1}^{\infty} \lambda_l g_l (x) g_l(y).
\end{eqnarray*}
Consider
}
For the general degenerate kernel $h$ defined in \eq{1.2}, we have

\begin{eqnarray*} \label {1.3}
n (n-1) U_n
 &=&  \sum_{l=1}^{\infty} \lambda_l \sum_{1 \leq i \neq j \leq n} g_l(X_i) g_l(X_j) \nonumber \\
&=&\sum_{l=1}^{\infty} \lambda_l \bigg (  \Big (\sum_{i=1}^n g_l (X_i)  \Big  )^2 - \sum_{i=1}^n g_l^2(X_i) \bigg ).
\end{eqnarray*}
Suppose that $g_l(X)$ is in the domain of attraction of a normal law for every  $l \geq 1$. Then $L_l(x) := E g_l^2(X_1)I (|g_l(X_1)| \leq x)$  is a slowly varying function for all $l \geq 1$ as $x \rightarrow \infty$. Let $\{x_n, n\ge 1\}$ be a sequence of positive numbers  such that $x_n\rightarrow \infty$ and $x_n=o(\sqrt{n})$ as $n \rightarrow \infty$. For each $l \geq 1$, set
\begin{eqnarray} \label {bznl}
b_l &=& \inf \Big \{ x \geq 1:  L_l (x)>0 \Big  \}, \nonumber \\
z_{n, l} &=& \inf \bigg \{ s : s \geq b_l +1, \frac{ L_l(s)}{s^2} \leq \frac{x_n^2}{n} \bigg \}.
\end{eqnarray}
\ignore{
Since $L_l(s)/s^2 \rightarrow 0$ as $s \rightarrow \infty$ and $ L_l(x)$ is right continuous, then $z_{n,l} \rightarrow \infty$ and for all $n$ sufficiently large,
\begin{eqnarray} \label {nznl}
n  L_l(z_{n, l})= x_n^2 z_{n, l}^2.
\end{eqnarray}
}
Write
\begin{eqnarray*}\label {AB1.5a}
W_n = \frac{n(n-1) U_n}{\max_{1 \leq l < \infty} \lambda_l \sum_{i=1}^n g_l^2(X_i)}.
\end{eqnarray*}


\noindent
We have the following self-normalized moderate deviation.
\begin{theorem}\label{moderate}
Let $Eg_l(X)=0$ and $\lambda_l \geq 0$ for every $l \geq 1$.  
\begin{eqnarray}\label {a1}
\sup_{ x \in R }\frac{\sum_{l=m+1}^{\infty}\lambda_l g^2_l(x)}{\sum_{1 \leq l <\infty}\lambda_l g^2_l(x)} \rightarrow 0 \ \  \textrm{as} \ \ m \rightarrow \infty
\end{eqnarray}
and
\begin{eqnarray}\label {a2}
\lim_{n \rightarrow \infty}\frac{E g_l(X)I(|g_l(X)| \leq z_{n,l})g_k(X)I(|g_k(X)| \leq z_{n,k})}{\sqrt{L_l(z_{n,l})L_k(z_{n,k})}} \rightarrow 0
\end{eqnarray}
for any $l \neq k$. Then for $x_n \rightarrow \infty$ and $x_n=o(\sqrt{n})$,
\begin{eqnarray*}\label {th1}
\lim_{n \rightarrow \infty} x_n^{-2} \log P \left ( W_n \geq x^2_n \right ) = - \frac{1}{2}.
\end{eqnarray*}
\end{theorem}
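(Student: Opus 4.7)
I would set $S_{n,l}=\sum_i g_l(X_i)$, $V_{n,l}^2=\sum_i g_l^2(X_i)$, $\eta_l=S_{n,l}/V_{n,l}$, $M_n=\max_l \lambda_l V_{n,l}^2$, and $w_l=\lambda_l V_{n,l}^2/M_n\in[0,1]$ with $\max_l w_l=1$. Then
\[
W_n \;=\; \sum_{l\ge 1} w_l(\eta_l^2-1),
\]
and since $g_l(X_1)$ is centered and in the domain of attraction of a normal law, Shao's one-dimensional self-normalized MDP \eqref{0.3} gives $P(|\eta_l|\ge y_n)=\exp\{-y_n^2(1+o(1))/2\}$ for each $l$. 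The plan is to lift this one-dimensional MDP to the weighted sum, using the truncation sequence $z_{n,l}$ from \eqref{bznl} as the bridge.

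The key a priori consequence of \eqref{a1}: choose $m_0$ with $\sup_x\sum_{l>m_0}\lambda_l g_l^2(x)\le\tfrac12\sum_l\lambda_l g_l^2(x)$; summing over $i$ gives $\sum_{l>m_0}\lambda_l V_{n,l}^2\le\tfrac12\sum_l\lambda_l V_{n,l}^2$, hence $\sum_l\lambda_l V_{n,l}^2\le 2m_0 M_n$ and $\sum_l w_l\le 2m_0$ almost surely. This reduces the problem to finitely many indices $l\le m$ (with $m\to\infty$ slowly); the remaining tail $\sum_{l>m}w_l(\eta_l^2-1)$ is shown to be negligible on the log scale. For the finite-dim problem, after truncating each $g_l(X_i)$ at $z_{n,l}$ in the standard self-normalized MDP fashion, condition \eqref{a2} says the normalized cross-moments vanish, so a joint MDP for $(\eta_1,\dots,\eta_m)$ with rate function $\tfrac12\sum y_l^2$ holds---the truncated $\eta_l$'s behave as asymptotically independent $N(0,1)$'s on the moderate deviation scale.

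For the \emph{lower bound}, let $l^*$ maximize $\lambda_l V_{n,l}^2$; then $w_{l^*}=1$ and $\eta_l^2\ge 0$ give
\[
W_n \;\ge\; \eta_{l^*}^2 - \sum_l w_l \;\ge\; \eta_{l^*}^2 - 2m_0,
\]
so $P(W_n\ge x_n^2)\ge P(\eta_{l^*}^2\ge x_n^2+2m_0)$. Decomposing by $\{l^*=l_0\}$ and using the approximate independence of $\eta_{l_0}$ from the $\sigma$-algebra generated by the $(V_{n,l})_l$ (which determines $l^*$), the one-dimensional MDP \eqref{0.3} applied on each slice yields the lower bound $\exp\{-x_n^2(1+o(1))/2\}$. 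For the \emph{upper bound}, I would use an exponential Chebyshev estimate on the truncated sum. The approximate independence and the chi-squared log-MGF produce
\[
\log E\exp(tW_n)\;\approx\;\sum_{l\le m}\Bigl[-\tfrac12\log(1-2tw_l)-tw_l\Bigr],\qquad 0<t<\tfrac12;
\]
optimizing at $t=(1-1/x_n^2)/2$, the dominant factor comes from the index with $w_l=1$ and contributes an $x_n^{O(1)}$ polynomial prefactor multiplying $e^{-x_n^2/2}$, which gives $\log P(W_n\ge x_n^2)\le -x_n^2(1+o(1))/2$.

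The main obstacle is that the weights $w_l$, the self-normalizers $V_{n,l}$ inside $\eta_l$, and the random maximizer $l^*$ are all built from the same $X_i$'s; they cannot be decoupled by straightforward conditioning, and Shao's one-dimensional MDP does not directly apply to $\eta_{l^*}$ since $l^*$ is random. The truncation at $z_{n,l}$ together with \eqref{a2} is the essential device that decorrelates the contributions of different indices to leading order on the log scale, so that the random index $l^*$ becomes effectively independent of the truncated $\eta_{l_0}$'s. Making this decoupling rigorous, while simultaneously controlling the $l>m$ tail via \eqref{a1}, is where the bulk of the technical work lies.
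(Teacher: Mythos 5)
Your framing correctly identifies the three ingredients that drive the result---truncation at $z_{n,l}$, reduction to finitely many indices via \eqref{a1}, and asymptotic orthogonality of coordinates via \eqref{a2}---but the two central steps you propose do not actually go through, and the paper's proof works around precisely the obstacles you flag at the end.

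For the lower bound, your plan writes $W_n \ge \eta_{l^*}^2 - \sum_l w_l$ and then appeals to ``approximate independence of $\eta_{l_0}$ from the $\sigma$-algebra generated by $(V_{n,l})_l$.'' This independence fails: $\eta_{l_0} = S_{n,l_0}/V_{n,l_0}$ has $V_{n,l_0}$ both in its own denominator and inside the conditioning $\sigma$-algebra, so the self-normalization ties them together in exactly the way you cannot wave away. The paper avoids the random maximizer $l^*$ altogether. It first discards all but the $l=1$ summand in the numerator, then conditions on $\{\max_{i\le n} |g_l(X_i)|\le z_{n,l}, 1\le l\le m\}$ so that the data are replaced by conditioned variables $\tilde g_l(X_i)$ whose second moment $\sigma_l^2 \approx n L_l(z_{n,l})$ is \emph{deterministic}; after that the maximizing index of $\lambda_l\sigma_l^2$ is a fixed constant, so ``WLOG $l=1$'' is legitimate, and Stout's exponential lower bound (Theorem 5.2.2 of \cite{Stout}) applied to $\sum_i \tilde g_1(X_i)$ finishes the lower bound. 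Your route does not have a mechanism for making $l^*$ deterministic or separating it from $\eta_{l^*}$.

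For the upper bound, the proposed Chernoff computation $\log E\exp(tW_n)\approx \sum_l[-\tfrac12\log(1-2t w_l)-tw_l]$ treats $W_n$ as a quadratic form with fixed weights $w_l$, but $W_n$ is a \emph{ratio} of dependent random quantities: the $w_l$'s, the denominators $V_{n,l}$, and the numerator are all built from the same $X_i$'s, so there is no tractable scalar MGF. The paper instead splits the event: it first shows the denominator $\max_l\lambda_l V_{n,l}^2$ cannot be small ($I_{1,1}, I_{1,2}$, via an exponential inequality for nonnegative sums), reducing to a probability with a deterministic normalizer $n\max_l\lambda_l L_l(z_{n,l})$; it handles the heavy tail $|g_l|>z_{n,l}$ by iterated conditioning on each $X_j$ (this is $I_2$, not an MGF of $W_n$ but of a different auxiliary sum $\sum_j \phi_j$ that is bounded); it controls the $l>m$ remainder by decoupling and the Gin\'e--Lata\l a--Zinn inequality for degenerate $U$-statistics ($I_3$); and the main finite-dimensional term ($I_4$) is treated not by a chi-squared heuristic but by Einmahl's multivariate normal approximation after whitening the truncated vector $\boldsymbol{G}_{n,i}$ by $\Sigma^{-1/2}$, which is where \eqref{a2} enters rigorously (it makes $\Sigma$ asymptotically diagonal). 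Your proposal gestures at each of these phenomena but supplies no substitute for these tools, and you acknowledge this in the last paragraph---that acknowledgment is accurate: those decoupling steps are the proof, and they are not a routine tightening of what you wrote.
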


As an application, we have the following self-normalized LIL.
\begin{theorem} \label {LIL}
Under the assumptions in Theorem \ref{moderate},  and instead of \eqref{a1}, we assume that
for each $l \in [1, \infty)$, there is a constant $c_l >0$ such that
\begin{eqnarray}\label{a1'}
\sup_{x \in R}\frac{\lambda_l g_l^2(x)}{\sum_{l=1}^{\infty}\lambda_l g^2_l(x)} \leq c_l \ \ and \ \   \sum_{l=1}^{\infty}  c_l < \infty.
\end{eqnarray}
Then
\begin{eqnarray} \label {th2}
\limsup_{n \rightarrow \infty} \frac{W_n}{ \log \log n} = 2 \ \ a.s.
\end{eqnarray}
\end{theorem}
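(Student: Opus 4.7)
\textbf{Proof sketch of Theorem \ref{LIL}.} The plan is to derive the LIL \eqref{th2} from the self-normalized moderate deviation of Theorem \ref{moderate} by the classical two-sided argument: Borel--Cantelli along a geometric subsequence together with a maximal inequality for the upper bound $\limsup W_n/\log\log n\le 2$, and blocking plus the second Borel--Cantelli lemma for the lower bound $\limsup W_n/\log\log n\ge 2$. Note first that $(a1')$ implies $(a1)$, since $\sup_x\sum_{l>m}\lambda_l g_l^2(x)/\sum_l\lambda_l g_l^2(x)\le \sum_{l>m}c_l\to 0$, so Theorem \ref{moderate} is available.

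\textbf{Upper bound.} Fix $\epsilon>0$ and put $x_n^2=(2+\epsilon)\log\log n$. Theorem \ref{moderate} yields
\[
P\bigl(W_n\ge(2+\epsilon)\log\log n\bigr)\le (\log n)^{-(1+\epsilon/3)}
\]
for all large $n$. Along $n_k=\lfloor\theta^k\rfloor$ with $\theta>1$, these probabilities sum (as $(\log n_k)^{-1-\epsilon/3}\sim k^{-1-\epsilon/3}$), so by Borel--Cantelli $W_{n_k}\le(2+\epsilon)\log\log n_k$ eventually almost surely. To cover $n\in[n_k,n_{k+1}]$, I would use the identity $n(n-1)U_n=\sum_l\lambda_l(S_{n,l}^2-V_{n,l}^2)$ with $S_{n,l}=\sum_{i=1}^n g_l(X_i)$, the monotonicity of the denominator $\max_l\lambda_lV_{n,l}^2$ in $n$, and a L\'evy-type maximal inequality to control $\max_{n_k\le n\le n_{k+1}}S_{n,l}^2$ by a constant multiple of $S_{n_{k+1},l}^2$ on a high-probability event. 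Because $(a1')$ lets us truncate the series at some finite $m=m(\epsilon)$ with tail error at most $\epsilon W_n$, only finitely many partial-sum processes must be controlled. Letting $\theta\downarrow 1$ and $\epsilon\downarrow 0$ completes the upper bound.

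\textbf{Lower bound.} Fix $\epsilon>0$, set $x_n^2=(2-\epsilon)\log\log n$, and choose $n_k=\lfloor e^k\rfloor$ so that the block $\mathcal B_k=(n_{k-1},n_k]$ has $|\mathcal B_k|\sim(1-e^{-1})n_k$. Let $\widetilde W_k$ denote the analogue of $W_{|\mathcal B_k|}$ built from $\{X_i\}_{i\in\mathcal B_k}$ alone; the $\widetilde W_k$ are independent across $k$ and each has the same law as $W_{|\mathcal B_k|}$. Theorem \ref{moderate} applied to $\widetilde W_k$ gives
\[
P\bigl(\widetilde W_k\ge (2-\epsilon/2)\log\log n_k\bigr)\ge (\log n_k)^{-(1-\epsilon/4)}\ge k^{-1+\epsilon/4}
\]
for all large $k$, which is not summable, so the second Borel--Cantelli lemma yields $\widetilde W_k\ge(2-\epsilon)\log\log n_k$ infinitely often. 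A comparison between $W_{n_k}$ and $\widetilde W_k$ — bounding the cross-terms $\sum_{i\in\mathcal B_k,\,j\le n_{k-1}}h(X_i,X_j)$ by Cauchy--Schwarz against the denominator, and comparing $\max_l\lambda_lV_{n_k,l}^2$ with its block version up to a $1+o(1)$ factor using the slowly varying behaviour of $L_l$ for $g_l(X)$ in the domain of attraction of the normal law — transfers the lower bound to $W_{n_k}$ and gives $\limsup W_n/\log\log n\ge 2$ almost surely.

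\textbf{Main obstacle.} The principal difficulty is that the denominator $\max_l\lambda_lV_{n,l}^2$ couples contributions from all indices $i\le n$ and all components $l\ge 1$, so $W_n$ does not decompose along blocks. Handling this requires the finite-$m$ reduction via $(a1')$ together with sharp two-sided control of $V_{n,l}^2/(nL_l(z_{n,l}))$, the same truncated-moment machinery used in the proof of Theorem \ref{moderate}. In the upper bound the maximal inequality must be uniform enough in $l$ that the errors accumulated across the $m$ components stay within the $\epsilon\log\log n$ slack; in the lower bound the cross-terms between blocks and the discrepancy between the block and global denominators must both be shown to be $o(\log\log n_k)\cdot\max_l\lambda_lV_{n_k,l}^2$.
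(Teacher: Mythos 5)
Your overall strategy -- deriving the LIL from the self-normalized moderate deviation of Theorem \ref{moderate} via Borel--Cantelli plus a maximal inequality for the upper bound, and a converse Borel--Cantelli argument for the lower bound -- is the right template, and your upper-bound sketch aligns with the paper's: the paper also blocks along $n=[\theta^j]$, applies the moderate-deviation upper bound to the $H_1$ piece, and controls the increment over $(n,\theta n]$. The key difference is the maximal inequality: you invoke a L\'evy-type inequality for the component partial-sum processes $S_{n,l}$, whereas the paper decomposes the increment into three pieces $H_{2,1},H_{2,2},H_{2,3}$ and, for the dominant piece $H_{2,3}$, uses Lemma \ref{GKLZ} (the Gin\'e--Kwapie\'n--Lata{\l}a--Zinn maximal inequality for $U$-statistics) together with the decoupled Bernstein-type bound of Proposition \ref{supfV}. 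A L\'evy inequality for the $S_{n,l}$ alone does not immediately control the coupled ratio because the denominator $\max_l\lambda_l V_{n,l}^2$ also moves with $n$; the paper handles that coupling through the separate $H_{2,1}$ and $H_{2,2}$ terms, and your sketch would need to reproduce that bookkeeping.

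Your lower bound is a genuinely different and substantially heavier route than the paper's, and as written it has a concrete gap. The paper's lower bound is two lines: it notes
\[
\frac{W_n}{\log\log n}\ \ge\ \sum_{k\ge1}\frac{\bigl(\sum_{i\le n}g_k(X_i)\bigr)^2}{V_{n,k}^2\,\log\log n}\,I_{\{k=\arg\max_l\lambda_lV_{n,l}^2\}}\ -\ o(1),
\]
i.e.\ for each $n$ the numerator already contains the self-normalized ratio for the index that realizes $\max_l\lambda_l V_{n,l}^2$, and then invokes the Griffin--Kuelbs self-normalized LIL \eqref{1.1aa} for a single component. No blocking, no second Borel--Cantelli. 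Your blocking approach can in principle work, but with $n_k=\lfloor e^k\rfloor$ (so $\theta=e$) the cross-terms are not negligible: by Cauchy--Schwarz and the upper-bound LIL for the pre-block sums, the cross-term $2\sum_l\lambda_l\bigl(\sum_{i\in\mathcal B_k}g_l(X_i)\bigr)\bigl(\sum_{j\le n_{k-1}}g_l(X_j)\bigr)$ is bounded in the envelope only by a multiple of $\sqrt{(1-1/\theta)/\theta}\,\log\log n_k\cdot\max_l\lambda_lV_{n_k,l}^2$, and $\sqrt{(1-1/\theta)/\theta}\approx0.48$ for $\theta=e$, so this is of the same order as the target $2\log\log n_k$ and cannot be absorbed into an $\epsilon$-slack. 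Likewise the discrepancy between the block denominator $\max_l\lambda_l\sum_{i\in\mathcal B_k}g_l^2(X_i)$ and the full denominator is a factor $\approx1/(1-1/\theta)\approx1.58$, not $1+o(1)$. To make your route rigorous you would need to let $\theta\to\infty$ as $\epsilon\downarrow0$ (as in the classical Hartman--Wintner lower bound), or argue more delicately that, on the infinite set of $k$ where the block event occurs, the independent pre-block sums are at their typical scale rather than their LIL envelope. Neither repair is in your sketch, and both add considerable work compared to the paper's one-step appeal to the component-wise self-normalized LIL.
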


\begin{remark}
We use an example to show that (\ref{a2}) can't be removed. Let $g_1(x)=x$, $g_2(x)=x^3$ and $\lambda_1=\lambda_2=1$, $\lambda_l =0$ for $ l \geq 3$. Let $X$ be a Rademacher random variable. Then $n(n-1) U_n =\sum_{1 \leq i \neq j \leq n}(X_iX_j+X_i^3X_j^3)=2\sum_{1 \leq i \neq j \leq n}X_iX_j$ and $W_n=2\sum_{1 \leq i \neq j \leq n}X_iX_j/\sum_{i=1}^n X_i^2 $. By (\ref{1.1aa}),\\ $\limsup_{n \rightarrow \infty}W_n/\log \log n =4$ a.s. which contradicts (\ref{th2}).
\end{remark}

\section{Proofs}
\setcounter{equation}{0}

In the proofs of theorems, we will use the following properties for the slowly varying functions $g_l$ (e.g., Bingham et al. \cite{BGT}). As $x \rightarrow \infty$,
\begin{eqnarray}\label {Pgxa}
P (|g_l(X)| \geq x) =o(L_l(x)/x^2),
\end{eqnarray}
\begin{eqnarray}\label {Egx}
E |g_l(X)| I (|g_l(X)| \geq x)= o(L_l(x)/x),
\end{eqnarray}
\begin{eqnarray}\label {Egpx}
E |g_l(X)|^p I (|g_l(X)| \leq x)=o(x^{p-2}L_l(x)), \ \  p>2.
\end{eqnarray}

Since $L_l(s)/s^2 \rightarrow 0$ as $s \rightarrow \infty$ and $ L_l(x)$ is right continuous, hence in (\ref {bznl}), $z_{n,l} \rightarrow \infty$ and for all $n$ sufficiently large,
\begin{eqnarray} \label {nznl}
n  L_l(z_{n, l})= x_n^2 z_{n, l}^2.
\end{eqnarray}


\subsection{The upper bound of Theorem \ref{moderate}}

For each $l \geq 1$ and $i \geq 1$, denote the truncated function
\begin{eqnarray*}\label {J1.5}
\bar{g}_{l}(X_i) = g_l(X_i) I \left (|g_l(X_i)|\leq z_{n , l} \right ).
\end{eqnarray*}
Since $E g_l(X_i)=0$, we have
\begin{align}\label{barest}
E \bar{g}_l(X_i)= o(L_l(z_{n,l})/z_{n,l})= o(x_n \sqrt{L_l(z_{n,l}})/\sqrt{n})
\end{align}
by (\ref{Egx}) and (\ref{nznl}).
For each $l \geq 1$, write
\begin{eqnarray*} \label {YnVn}
Y_{n,l} = \sum_{i=1}^n g_l(X_i),  \ \    \bar{Y}_{n,l}= \sum_{i=1}^n \bar{g}_l(X_i) \ \  \textrm{and} \ \  V^2_{n,l} = \sum_{i=1}^n g_l^2(X_i).
\end{eqnarray*}
By condition (\ref{a1}), for each $0<\varepsilon<1$, there exists $1 \leq m < \infty$ such that
\begin{eqnarray}\label{a1b}
m \max_{1 \leq l < \infty}\lambda_l V^2_{n,l}\geq \sum_{l=1}^m \lambda_l V^2_{n,l} \geq (1-\varepsilon)\sum_{l=1}^{\infty}\lambda_l V^2_{n,l}.
\end{eqnarray}
Hence for $x_n \rightarrow \infty$,
\begin{eqnarray} \label {AC2.1}
P \left ( W_n \geq (1+\varepsilon) x_n^2 \right ) \leq  P \bigg ( \frac{\sum_{l=1}^{\infty} \lambda_l Y_{n,l}^2}{\max_{1 \leq l <\infty} \lambda_l V^2_{n,l}} \geq x^2_n \bigg ) .
\end{eqnarray}
Observe that for any random variables $\{U_l\}_{l=1}^{\infty}$ and $\{ Z_l \}_{l=1}^{\infty}$ and any constants $x>0$ and $0<a < 1/3$, by the Cauchy inequality
\begin{eqnarray}\label{XY}
&& P\left  ( \sum_l (U_l+Z_l)^2  \geq x \right  ) \nonumber \\
&\leq&  P\left (\sum_l U_l^2 \geq (1-a)^2 x \right ) +  P\left (\sum_l Z_l^2  \geq a^2 x \right ) \nonumber\\
&\leq&   P\left (\sum_l U_l^2 \geq (1-2a) x \right ) +  P\left (\sum_l Z_l^2  \geq a^2 x \right ).
\end{eqnarray}

For any $0 < \varepsilon < 1/3$, by (\ref{AC2.1}) and (\ref{XY}),
\begin{eqnarray}\label {m2.7}
&& P\bigg (W_n \geq (1+\varepsilon)x_n^2  \bigg )\nonumber\\
&\leq&   P\bigg ( \frac{\sum_{l=1}^{\infty} \lambda_l \left (  \sum_{i=1}^{n} g_l(X_i) I \left (|g_l(X_i)| > z_{n,l} \right )  \right )^2}{\max_{1 \leq l <\infty}\lambda_l V^2_{n,l}} \geq  \varepsilon^2 x_n^2 \bigg) \nonumber\\
&&+ P \bigg ( \frac{ \sum_{l=1}^{\infty} \lambda_l \bar{Y}_{n,l}^2}{\max_{1 \leq l <\infty}\lambda_l V^2_{n,l}} \geq    (1- 2\varepsilon)  x_n^2  \bigg ) .
\end{eqnarray}
For any integer $m \geq 1$ and any constant $C_1>0$ with $C_1 \varepsilon<1$,
\begin{eqnarray}\label {2.8A}
&& P \bigg ( \frac{ \sum_{l=1}^{\infty} \lambda_l \bar{Y}_{n,l}^2}{\max_{1 \leq l <\infty}\lambda_l V^2_{n,l}} \geq    (1- 2\varepsilon)  x_n^2  \bigg ) \nonumber\\
&\leq&P \bigg  (\max_{1 \leq l <\infty} \lambda_l V^2_{n,l} \leq \frac{n}{\varepsilon}\sum_{l=m+1}^{\infty}\lambda_l L_l(z_{n,l})\bigg )\nonumber\\
&&+  P \bigg  (\max_{1 \leq l <\infty} \lambda_l V^2_{n,l} \leq (1-\varepsilon)n \max_{1 \leq l \leq m}\lambda_l L_l(z_{n,l})\bigg )\nonumber\\
&& +  P \bigg ( \frac{ \sum_{l=m+1}^{\infty} \lambda_l \bar{Y}_{n,l}^2}{(n/\varepsilon)\sum_{l=m+1}^{\infty}\lambda_l L_l(z_{n,l})} \geq C_1 \varepsilon  (1- 2\varepsilon)  x_n^2  \bigg )\nonumber\\
&& +    P \bigg ( \frac{ \sum_{l=1}^{m} \lambda_l \bar{Y}^2_{n,l}}{(1-\varepsilon)n\max_{1 \leq l \leq m}\lambda_l L_l(z_{n,l})} \geq  (1-C_1\varepsilon)  (1- 2\varepsilon)  x_n^2  \bigg ).
\end{eqnarray}
Applying (\ref{2.8A}) to (\ref{m2.7}), we have
\begin{eqnarray}\label {m2.10}
&&P\bigg ( W_n \geq (1+\varepsilon) x_n^2  \bigg ) \nonumber \\
&\leq& P \bigg  (\max_{1 \leq l <\infty} \lambda_l V^2_{n,l} \leq \frac{n}{\varepsilon}\sum_{l=m+1}^{\infty}\lambda_l L_l(z_{n,l})\bigg )\nonumber\\
&&+  P \bigg  (\max_{1 \leq l <\infty} \lambda_l V^2_{n,l} \leq (1-\varepsilon)n \max_{1 \leq l \leq m}\lambda_l L_l(z_{n,l})\bigg )\nonumber\\
&& + P\bigg ( \frac{\sum_{l=1}^{\infty} \lambda_l \left (  \sum_{i=1}^{n} g_l(X_i) I \left (|g_l(X_i)| > z_{n,l} \right )  \right )^2}{\max_{1 \leq l <\infty}\lambda_l V^2_{n,l}} \geq  \varepsilon^2 x_n^2 \bigg) \nonumber\\
&& + P \bigg ( \frac{ \sum_{l=m+1}^{\infty} \lambda_l \bar{Y}_{n,l}^2}{n\sum_{l=m+1}^{\infty}\lambda_l L_l(z_{n,l})} \geq C_1 (1- 2\varepsilon)  x_n^2  \bigg )\nonumber\\
&& +  P \bigg ( \frac{ \sum_{l=1}^{m} \lambda_l \bar{Y}^2_{n,l}}{n\max_{1 \leq l \leq m}\lambda_l L_l(z_{n,l})} \geq  (1-C_1\varepsilon)  (1- 2\varepsilon)^2  x_n^2 \bigg)  \nonumber\\
&:=& I_{1,1} +I_{1,2} +I_2 +I_3 +I_4.
\end{eqnarray}


\subsection{Estimation of $I_{1,1}$ and $I_{1,2}$}


\begin{proposition}\label{proposition1}
For $m \geq 1$ sufficiently large,
\begin{eqnarray}\label {I1(1)}
I_{1,1}=P \left (\max_{1 \leq l <\infty} \lambda_l V^2_{n,l }\leq \frac{n}{\varepsilon}\sum_{l=m+1}^{\infty}\lambda_l L_l(z_{n,l})\right )  \leq \exp (-2 x_n^2)
\end{eqnarray}
and for any constants $\delta>0$ and $0<\eta<1$,
\begin{eqnarray}\label {HI21}
P \left (2m\max_{1 \leq l < \infty} \lambda_l V^2_{n,l} \leq (1-\eta) n\sum_{1 \leq l < \infty}\lambda_l L_l(z_{n,l})\right )
\leq \exp (-2 x_n^2),
\end{eqnarray}
\begin{eqnarray}\label {I(2.9)}
&& P \left (\max_{1 \leq l <\infty} \lambda_l \sum_{i=1}^n g^2_l(X_i)I(|g_l(X_i)| \leq \delta z_{n,l}) \leq (1-\eta) n \max_{1 \leq l <\infty}\lambda_l L_l(z_{n,l})\right )  \nonumber\\
&\leq& \exp (-2 x_n^2).
\end{eqnarray}
In particular,
\begin{eqnarray}
I_{1,2}\le P \left (\max_{1 \leq l <\infty} \lambda_l V^2_{n,l} \leq (1-\varepsilon) n \max_{1 \leq l <\infty}\lambda_l L_l(z_{n,l})\right )  \leq \exp (-2 x_n^2). \nonumber
\end{eqnarray}
\end{proposition}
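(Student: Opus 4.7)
The plan is to establish \eq{I(2.9)} first by a Cram\'er--Chernoff estimate on the lower tail of an i.i.d.\ sum of bounded nonnegative summands, and then to reduce \eq{HI21}, \eq{I1(1)}, and the ``in particular'' claim for $I_{1,2}$ to it via a tail consequence of \eq{a1}.

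For \eq{I(2.9)}, I fix $\delta>0$, $0<\eta<1$, and for each $n$ pick an index $l_n$ attaining (or nearly attaining) $\max_l \lambda_l L_l(z_{n,l})$. On the event in \eq{I(2.9)}, the truncated sum $T_n := \sum_{i=1}^n g_{l_n}^2(X_i) I(|g_{l_n}(X_i)| \leq \delta z_{n,l_n})$ satisfies $T_n \leq (1-\eta) n L_{l_n}(z_{n,l_n})$. By slow variation of $L_{l_n}$, $E T_n = n L_{l_n}(\delta z_{n,l_n}) = (1+o(1)) n L_{l_n}(z_{n,l_n})$, so for $n$ large this event is contained in $\{T_n \leq (1-\eta/2) E T_n\}$. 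The summands are bounded by $\delta^2 z_{n,l_n}^2$, and by \eq{Egpx} with $p=4$, $E g_{l_n}^4(X) I(|g_{l_n}(X)|\leq \delta z_{n,l_n}) = o(z_{n,l_n}^2 L_{l_n}(z_{n,l_n}))$. The exponential Markov (Chernoff) inequality applied with $e^{-x}\leq 1-x+x^2/2$ for $x\geq 0$ gives a lower-tail bound whose exponent is of order $-\eta^2 (E T_n)^2/(n E g_{l_n}^4(X) I(\cdots))$; rewriting $n L_{l_n}(z_{n,l_n}) = x_n^2 z_{n,l_n}^2$ via \eq{nznl} turns this exponent into $-\eta^2 x_n^2/o(1)$, which is $\leq -2x_n^2$ for $n$ large. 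The ``in particular'' bound on $I_{1,2}$ then follows at once, because $V_{n,l}^2 \geq \sum_i g_l^2(X_i) I(|g_l(X_i)|\leq \delta z_{n,l})$ embeds the event for $I_{1,2}$ into the one handled by \eq{I(2.9)} (with $\eta=\varepsilon$, any $\delta$).

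To deduce \eq{HI21} and \eq{I1(1)}, I want to show that \eq{a1} implies the following truncated analogue: for every $\varepsilon'>0$ there is an $m_0$ such that for all $m\geq m_0$ and all $n$ large, $\sum_{l>m}\lambda_l L_l(z_{n,l}) \leq \varepsilon' \sum_l \lambda_l L_l(z_{n,l})$. Granting this with $\varepsilon'\leq 1/2$, one gets $\sum_l \lambda_l L_l(z_{n,l}) \leq 2m\max_l \lambda_l L_l(z_{n,l})$, so the event in \eq{HI21} forces $\max_l \lambda_l V_{n,l}^2 \leq (1-\eta)n\max_l \lambda_l L_l(z_{n,l})$, which is handled by the $I_{1,2}$ specialization of \eq{I(2.9)}. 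For \eq{I1(1)} one chooses $m$ still larger so that $\sum_{l>m}\lambda_l L_l(z_{n,l}) \leq \varepsilon(1-\eta)\max_l \lambda_l L_l(z_{n,l})$, whence $(n/\varepsilon)\sum_{l>m}\lambda_l L_l(z_{n,l}) \leq (1-\eta)n\max_l \lambda_l L_l(z_{n,l})$, reducing once more to \eq{I(2.9)}.

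The main obstacle is the promised transfer of \eq{a1}, which is a pointwise statement about $\sum_{l>m}\lambda_l g_l^2(x)/\sum_l \lambda_l g_l^2(x)$, into a bound on the truncated second moments $L_l(z_{n,l})=E g_l^2(X)I(|g_l(X)|\leq z_{n,l})$. Integrating the pointwise inequality against the law of $X$ is complicated by the truncations and by the possible non-integrability of $\sum_l \lambda_l g_l^2(X)$; one must split according to which indicators $I(|g_l(X)|\leq z_{n,l})$ are active and control the ``missing'' contribution of the complementary events using \eq{Pgxa}. A secondary subtlety is ensuring that the $o(1)$ in the Chernoff exponent is uniform along the moving sequence $(l_n)$; here one can first invoke \eq{a1} to confine the argmax to a fixed finite range $\{1,\dots,m_0\}$, on which uniformity in $l$ is automatic.
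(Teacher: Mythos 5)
Your strategy is genuinely different from the paper's, and the part you yourself flag as ``the main obstacle'' is exactly where it breaks down. You try to reduce (\ref{I1(1)}) and (\ref{HI21}) to (\ref{I(2.9)}) through an unproved deterministic bound $\sum_{l>m}\lambda_l L_l(z_{n,l})\leq\varepsilon'\sum_{l\ge 1}\lambda_l L_l(z_{n,l})$. Condition (\ref{a1}) is pointwise in $x$; once you truncate each $g_l^2(x)$ at a level $z_{n,l}$ that depends on $l$, there is no clean passage to the integrated statement. Integrating (\ref{a1}) only gives $\sum_{l>m}\lambda_l L_l(z_{n,l}) \leq \varepsilon\, E\sum_{l\ge 1}\lambda_l g_l^2(X)$, and the right-hand side can be infinite because each $g_l(X)$ is merely in the domain of attraction of a normal law. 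The hinted workaround (split on which indicators $I(|g_l(X)|\leq z_{n,l})$ are active and control the complements via (\ref{Pgxa})) is not carried out, and it is not a small fix: the indicator configuration ranges over an infinite index set with $l$-dependent thresholds. Without this transfer, your reductions for (\ref{I1(1)}) and (\ref{HI21}) do not close. Secondarily, the fix you propose for uniformity of the $o(1)$ in the Chernoff exponent---confining the maximizing index $l_n$ to a fixed finite range via (\ref{a1})---is asserted, not argued: (\ref{a1}) controls the tail of $\sum_l\lambda_l g_l^2(x)$ pointwise, which does not obviously constrain where $\lambda_l L_l(z_{n,l})$ is maximized.

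The paper avoids the transfer entirely by applying (\ref{a1}) to the random quantities rather than to their truncated second moments. Since (\ref{a1}) holds pointwise, summing over $i$ gives, for $m$ large, $\sum_{l>m}\lambda_l V_{n,l}^2\leq(\varepsilon/2)\max_{l}\lambda_l V_{n,l}^2$ almost surely (this is (\ref{a1a}) combined with (\ref{a1b})). On the event defining $I_{1,1}$ this forces $\sum_{l>m}\lambda_l\sum_i\bar g_l^2(X_i)\leq\sum_{l>m}\lambda_l V_{n,l}^2\leq(n/2)\sum_{l>m}\lambda_l L_l(z_{n,l})$, i.e.\ the truncated nonnegative sum falls to half its mean, and the lower-tail exponential inequality for sums of nonnegative independent variables (Theorem~2.19 of de la Pe\~na, Lai and Shao \cite{dLS}) together with the variance estimate (\ref{A2.12}) yields $\exp(-2x_n^2)$. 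The same scheme proves (\ref{HI21}). Your argument for (\ref{I(2.9)}) and the reduction of $I_{1,2}$ to it do agree with the paper's; the gap is confined to the two reductions above.
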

\begin{proof}
We shall apply the following exponential inequality (See, e.g., Theorem 2.19 of de la Pe\~{n}a, Lai and Shao  \cite{dLS}). If $Y_1, ..., Y_n$ are independent random variables with $Y_i \geq 0$, $\mu_n = \sum_{i=1}^n E Y_i$ and $B_n^2 = \sum_{i=1}^n E Y_i^2<\infty$, then for $0 < x < \mu_n$,
\begin{eqnarray}
P \bigg (\sum_{i=1}^n Y_i  \leq x \bigg ) \leq \exp \left ( - \frac{(\mu_n - x)^2}{2 B_n^2}  \right ). \nonumber
\end{eqnarray}
By (\ref{a1}), $\sum_{l=m+1}^{\infty}\lambda_l V^2_{n,l}/\sum_{l=1}^{\infty}\lambda_l V^2_{n,l} \rightarrow 0$ as $m \rightarrow \infty$. Then by (\ref{a1b}),
\begin{eqnarray}\label {a1a}
\frac{\sum_{l=m+1}^{\infty}\lambda_l V^2_{n,l}}{\max_{1 \leq l <\infty}\lambda_l V^2_{n,l}} \rightarrow 0  \ \ \  as \ \ m \rightarrow \ \infty.
\end{eqnarray}
Hence
\begin{eqnarray*}
\varepsilon  \max_{1 \leq l <\infty} \lambda_l V^2_{n,l } \geq 2   \sum_{l=m+1}^{\infty} \lambda_l V^2_{n,l}\geq 2   \sum_{l=m+1}^{\infty} \lambda_l\sum_{i=1}^n \bar{g}_l^2(X_i).
\end{eqnarray*}
Then
\begin{eqnarray}\label {A2.11}
I_{1,1}&\leq& P \left (\sum_{l=m+1}^{\infty} \lambda_l\sum_{i=1}^n \bar{g}_l^2(X_i)\leq  \frac{n}{2}  \sum_{l=m+1}^{\infty}\lambda_l L_l(z_{n,l})\right ) \nonumber\\
&\leq& \exp \left ( -\frac{  (n  \sum_{l=m+1}^{\infty}\lambda_{l} L_{l}(z_{n,l})/2)^2}{ 2 n   E(\sum_{l=m+1}^{\infty} \lambda_l \bar{g}^2_l(X_1))^2} \right ).
\end{eqnarray}
By Minkowski's integral inequality, (\ref{nznl}) and (\ref{Egpx}),
\begin{eqnarray}\label {A2.12}
E \left(\sum_{l=m+1}^{\infty} \lambda_l \bar{g}^2_l(X_1) \right)^2 &\leq& \left \{\sum_{l=m+1}^{\infty}\lambda_l  (E \bar{g}_l^4(X_1))^{1/2}\right\}^2\nonumber\\
& = &o\left(\frac{\sqrt{n}}{x_n} \sum_{l=m+1}^{\infty}\lambda_l L_l(z_{n,l})\right)^2.
\end{eqnarray}
Therefore, (\ref{I1(1)}) follows from (\ref{A2.11}) and (\ref{A2.12}).
To show \eqref{HI21}, notice that by \eqref{a1b},
\begin{eqnarray*}\label{a1b'}
m \max_{1 \leq l < \infty}\lambda_l V^2_{n,l}\geq (1-\eta)\sum_{l=1}^{\infty}\lambda_l V^2_{n,l}\geq  (1-\eta)  \sum_{l=1}^{\infty} \lambda_l\sum_{i=1}^n \bar{g}_l^2(X_i).
\end{eqnarray*}
Then \begin{eqnarray*}
&&P \left (2m\max_{1 \leq l < \infty} \lambda_l V^2_{n,l} \leq (1-\eta) n\sum_{1 \leq l < \infty}\lambda_l L_l(z_{n,l})\right ) \\
&\le& P \left (\sum_{l=1}^{\infty} \lambda_l\sum_{i=1}^n \bar{g}_l^2(X_i)\leq \frac{n}{2}  \sum_{l=1}^{\infty}\lambda_l L_l(z_{n,l})\right ) \nonumber\\
&\leq& \exp \left ( -\frac{  ((\frac{n}{2}  \sum_{l=1}^{\infty}\lambda_{l} L_{l}(z_{n,l}))^2}{ 2 n   E(\sum_{l=1}^{\infty} \lambda_l \bar{g}^2_l(X_1))^2} \right ).
\end{eqnarray*}
Similar to the proof of $I_{1,1}$ as in (\ref{A2.11}) and (\ref{A2.12}), we have \eqref{HI21}.

To show (\ref{I(2.9)}), let
\begin{eqnarray*}\label  {ln}
l_n = \min \left \{l: \lambda_l L_l(z_{n,l}) = \max_{1 \leq l < \infty}\lambda_l L_l(z_{n,l}) \right \}.
\end{eqnarray*}
Then
\begin{eqnarray*}\label {LIL2.7}
&& P \left  (\max_{1 \leq l <\infty} \lambda_{l}\sum_{i=1}^n g^2_{l}(X_i) I(|g_l(X_i)| \leq \delta z_{n,l})\leq (1-\eta)n \max_{1 \leq l <\infty} \lambda_{l} L_{l} (z_{n,l})    \right )\nonumber \\
&\leq& P \left  (  \sum_{i=1}^n \lambda_{l_n}g^2_{l_n}(X_i) I(|g_{l_n}(X_i)| \leq \delta z_{n,l_n})\leq (1-\eta)n   \lambda_{l_n} L_{l_n} (z_{n,l_n})    \right )\nonumber \\
&\leq& \exp \left ( -\frac{(1-(1- \eta))^2 (n  \lambda_{l_n} L_{l_n}(z_{n,l_n}))^2}{ 2 n \lambda^2_{l_n}E  g_{l_n}^4(X_1)I(|g_{l_n}(X_1)| \leq \delta z_{n,l_n})} \right )\nonumber\\
&=&\exp \left ( -\frac{ \eta^2 (n  \lambda_{l_n} L_{l_n}(z_{n,l_n}))^2}{ 2 n \lambda^2_{l_n}o(n L^2_{l_n}(\delta z_{n, l_n})/x_n^2)} \right ).
\end{eqnarray*}
Since $L_l(\delta z_{n,l})/L_l(z_{n,l}) \rightarrow 1$, then (\ref{I(2.9)}) follows.
\end{proof}

\subsection{Estimation of $I_2$}


\begin{proposition}\label {proposition2}
\begin{eqnarray*}
I_2= P\bigg ( \frac{\sum_{l=1}^{\infty} \lambda_l \left \{  \sum_{i=1}^{n} g_l(X_i) I \left (|g_l(X_i)| > z_{n,l} \right )  \right \}^2}{  \max_{1 \leq l <\infty}\lambda_l V^2_{n,l}} \geq \varepsilon^2 x_n^2 \bigg ) \leq  \exp (- 2x_n^2). \nonumber
\end{eqnarray*}
\end{proposition}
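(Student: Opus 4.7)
The argument reduces to controlling a sum of i.i.d.\ nonnegative random variables via Proposition~\ref{proposition1} and a deterministic Cauchy--Schwarz bound that exploits the rarity of the events $\{|g_l(X_i)|>z_{n,l}\}$. First I would use Proposition~\ref{proposition1} to handle the random denominator: for any fixed $\eta\in(0,1)$,
\[
P\Big(\max_{1\le l<\infty}\lambda_l V_{n,l}^2 < (1-\eta)\,n\max_{1\le l<\infty}\lambda_l L_l(z_{n,l})\Big)\le \exp(-2x_n^2),
\]
so on the complementary event the ratio in $I_2$ is at most $(1-\eta)^{-1}(n\max_l\lambda_l L_l(z_{n,l}))^{-1}\sum_l\lambda_l Y_l^2$ where $Y_l:=\sum_i g_l(X_i) I(|g_l(X_i)|>z_{n,l})$. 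It then suffices to bound $P(\sum_l\lambda_l Y_l^2 \ge (1-\eta)\,\varepsilon^2 x_n^2\, n\max_l\lambda_l L_l(z_{n,l}))$ by $\exp(-2x_n^2)$.

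Second, by Cauchy--Schwarz applied to the at most $N_l:=\#\{i\le n:|g_l(X_i)|>z_{n,l}\}$ nonzero summands of $Y_l$, together with $\sum_i g_l^2(X_i) I(|g_l(X_i)|>z_{n,l})\le V_{n,l}^2$, we have $Y_l^2\le N_l V_{n,l}^2$. Using $\lambda_l V_{n,l}^2\le M:=\max_k\lambda_k V_{n,k}^2$, summing over $l$ and dividing by $M$ reduces the event to $\{\sum_l N_l\ge \varepsilon^2 x_n^2\}$ (up to a harmless constant). Since each contribution to $N_l$ forces $g_l^2(X_i)>z_{n,l}^2$, we have $N_l\le R_l/z_{n,l}^2$ with $R_l:=\sum_i g_l^2(X_i) I(|g_l(X_i)|>z_{n,l})$, and invoking the identity~(\ref{nznl}), $nL_l(z_{n,l})=x_n^2 z_{n,l}^2$, this gives $\sum_l N_l\le (x_n^2/n)\sum_{i=1}^n\zeta_i$ where
\[
\zeta_i\;:=\;\sum_{l=1}^\infty \frac{g_l^2(X_i)\,I(|g_l(X_i)|>z_{n,l})}{L_l(z_{n,l})}.
\]
Thus it suffices to prove $P(\sum_i\zeta_i\ge \varepsilon^2 n)\le \exp(-2x_n^2)$.

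Finally, the $\zeta_i$ are i.i.d.\ nonnegative, and I would apply the exponential inequality of de la Pe\~{n}a, Lai and Shao (Theorem 2.19 of~\cite{dLS}, already used in Proposition~\ref{proposition1}) to $\sum_i\zeta_i$ after a secondary truncation $\zeta_i\wedge K$, with $K$ chosen so that $nE[\zeta_i\wedge K]\ll \varepsilon^2 n$; the truncated mean is estimated through~(\ref{Pgxa}) and~(\ref{Egpx}), while the residual $P(\zeta_i>K)$ is handled by a Prokhorov-type bound for rare events. Condition~(\ref{a1}) is invoked to reduce the infinite sum in $\zeta_i$ to a finite block $l\le m$ plus a uniformly small tail, making the moment computations tractable.

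The main obstacle is that $\zeta_i$ can have infinite expectation when some $g_l(X)$ fails to have a finite second moment, since then $L_l(z_{n,l})\to\infty$ but $E g_l^2(X) I(|g_l(X)|>z_{n,l})=\infty$ for every truncation level. The remedy is the secondary truncation described above combined with~(\ref{a1}): the $l>m$ block is absorbed uniformly because $\sum_{l>m}\lambda_l g_l^2(X_i)$ is negligible compared to $\sum_l \lambda_l g_l^2(X_i)$, while a Prokhorov bound of the form $P(\text{count}\ge k)\le (e\mu/k)^k$ with $\mu=o(x_n^2)$ handles the heavy-tail contributions above level $K$; both contributions can then be fit within the $\exp(-2x_n^2)$ budget.
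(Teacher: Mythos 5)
Your reduction loses the $\lambda_l$ weights at a critical step and the quantity you land on is not controllable. After writing $Y_l^2\le N_l V_{n,l}^2$ and using $\lambda_l V_{n,l}^2\le M$, you drop to $\sum_l N_l$ and then to $\zeta_i=\sum_l g_l^2(X_i)\,I(|g_l(X_i)|>z_{n,l})/L_l(z_{n,l})$. But this sum over $l$ has no $\lambda_l$ factor, so condition \eqref{a1} cannot tame its tail. Concretely, if $g_l\equiv g$ for all $l$ (permitted whenever $\sum_l\lambda_l<\infty$, so \eqref{a1} holds), then $z_{n,l}$ and $L_l(z_{n,l})$ are $l$-independent and $\zeta_i=+\infty$ on the event $\{|g(X_i)|>z_{n,1}\}$, which has positive probability; more generally $\sum_l N_l$ can be infinite almost surely. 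The inequality $\lambda_l V_{n,l}^2/M\le 1$ applied termwise is simply too weak: what \eqref{a1b} gives you is a bound on the \emph{weighted} aggregate $\sum_l \lambda_l V_{n,l}^2/M\le m/(1-\varepsilon)$, and that weighting has to be preserved. The paper does exactly this: after Cauchy--Schwarz it splits into diagonal and off-diagonal terms, keeps the off-diagonal piece as the weighted ratio
\[
\phi_j=\frac{\sum_{i<j}\sum_l\lambda_l g_l^2(X_i)\,I(|g_l(X_j)|>z_{n,l})}{\max_l\lambda_l\sum_{i<j}g_l^2(X_i)},
\]
which is \emph{uniformly bounded} by $m/(1-\varepsilon)$ thanks to \eqref{a1b}, and then controls $E\exp(t\sum_j\phi_j)$ by iterated conditioning on $X_n,X_{n-1},\dots$, using only $P(|g_l(X_j)|>z_{n,l})=o(x_n^2/n)$. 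That boundedness of $\phi_j$, inherited from the weights, is exactly the structure your reduction discards.

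A secondary problem: Theorem 2.19 of de la Pe\~na, Lai and Shao, as used in Proposition~\ref{proposition1}, is a \emph{lower-tail} inequality ($P(\sum_i Y_i\le x)$ for $x<\mu_n$). You need an upper-tail bound on $\sum_i\zeta_i$, so that theorem does not apply; you would at least need Bernstein's inequality for the truncated part. The Prokhorov-type bound for rare exceedances is a sound instinct and would match the paper's use of $P(|g_l(X_j)|>z_{n,l})=o(x_n^2/n)$, but it cannot be deployed until the $\sum_l N_l$ obstruction above is resolved.
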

\begin{proof}
By Cauchy-Schwarz inequality,
\begin{eqnarray*}
&& \frac{\sum_{l=1}^{\infty} \lambda_l \big \{  \sum_{i=1}^{n} |g_l(X_i)| I \left (|g_l(X_i)| > z_{n,l} \right )  \big \}^2}{\max_{1 \leq l <\infty}\lambda_l  V^2_{n,l}} \nonumber\\
&\leq&\frac{\sum_{l=1}^{\infty} \lambda_l  \sum_{i=1}^{n} g^2_l(X_i)\sum_{i=1}^{n}   I \left (|g_l(X_i)| > z_{n,l} \right )   }{\max_{1 \leq l <\infty}\lambda_l  V^2_{n,l}}.
\end{eqnarray*}
By (\ref{a1b}), the sum of the diagonal terms
\begin{eqnarray*}\label {md2.14z}
\frac{\sum_{l=1}^{\infty} \lambda_l  \sum_{i=1}^{n} g^2_l(X_i)  I \left (|g_l(X_i)| > z_{n,l} \right )   }{\max_{1 \leq l <\infty}\lambda_l  V^2_{n,l}} \leq   \frac{\varepsilon^2 x_n^2}{2}.
\end{eqnarray*}

\begin{eqnarray} \label {md2.15d}
I_2 &\leq& P\Bigg  ( \frac{\sum_{1 \leq i \neq j \leq n}\sum_{l=1}^{\infty} \lambda_l   g^2_l(X_i)  I   (|g_l(X_j)| >  z_{n,l}) }{\max_{1 \leq l <\infty}\lambda_l  \sum_{i=1}^{n} g^2_l(X_i)} \geq \frac{\varepsilon^2  x_n^2}{2} \Bigg )\nonumber\\
&\leq&P\Bigg  ( \frac{\sum_{1 \leq i < j \leq n}\sum_{l=1}^{\infty} \lambda_l   g^2_l(X_i)  I   (|g_l(X_j)| >  z_{n,l}) }{\max_{1 \leq l <\infty}\lambda_l  \sum_{i=1}^{n} g^2_l(X_i)} \geq \frac{\varepsilon^2  x_n^2}{4} \Bigg )\nonumber\\
&+&P\Bigg  ( \frac{\sum_{1 \leq j< i \leq n}\sum_{l=1}^{\infty} \lambda_l   g^2_l(X_i)  I   (|g_l(X_j)| >  z_{n,l}) }{\max_{1 \leq l <\infty}\lambda_l  \sum_{i=1}^{n} g^2_l(X_i)} \geq \frac{\varepsilon^2  x_n^2}{4} \Bigg )\nonumber\\
&\leq&P\Bigg  ( \sum_{2\le j\le n}\frac{\sum_{1 \leq i < j }\sum_{l=1}^{\infty} \lambda_l   g^2_l(X_i)  I   (|g_l(X_j)| >  z_{n,l}) }{\max_{1 \leq l <\infty}\lambda_l  \sum_{1 \leq i < j } g^2_l(X_i)} \geq \frac{\varepsilon^2  x_n^2}{4} \Bigg )\nonumber\\
&+&P\Bigg  ( \sum_{1 \leq j< n}\frac{\sum_{j< i \leq n}\sum_{l=1}^{\infty} \lambda_l   g^2_l(X_i)  I   (|g_l(X_j)| >  z_{n,l}) }{\max_{1 \leq l <\infty}\lambda_l  \sum_{j< i \leq n} g^2_l(X_i)} \geq \frac{\varepsilon^2  x_n^2}{4} \Bigg )\nonumber\\
&=&I_{2,1}+I_{2,2}.
\end{eqnarray}
Let
\begin{eqnarray*}
\phi_j = \frac{\sum_{1 \leq i < j}\sum_{l=1}^{\infty} \lambda_l   g^2_l(X_i)  I   (|g_l(X_j)| >  z_{n,l}) }{ \max_{1 \leq l <\infty}\lambda_l \sum_{1 \leq i < j } g^2_l(X_i)} .
\end{eqnarray*}
Then for any constant $t>0$,
\begin{eqnarray}\label {LI2.20b}
I_{2,1} \leq E e^{t \sum_{j=2}^{n} \phi_j}e^{-t \varepsilon^2 x_n^2/4}.
\end{eqnarray}
Let $E_j$ be the expectation of $X_j$ for $2 \leq j \leq n$. Then
\begin{eqnarray}  \label {LI2.21a}
E e^{t \sum_{j=2}^n \phi_j}=E (e^{t \sum_{j=2}^{n-1} \phi_j}E_{ n} e^{ t \phi_{n}}).
\end{eqnarray}
Since $|e^s-1| \leq e^{0 \vee s}|s|$ for any $s \in R$ and $0 \leq \phi_n \leq m$ for some $m$ sufficiently large, then
\begin{eqnarray*} \label {LI2.22b}
  \left |E_n e^{t \phi_{n}} - 1 \right | &\leq& e^{m t} t E_{n} \phi_{n} \nonumber\\
&=& \frac{e^{m t} t \sum_{1 \leq   i < n}\sum_{l=1}^{\infty} \lambda_l   g^2_l(X_i)   P   (|g_l(X_{n})| >  z_{n,l}) }{ \max_{1 \leq l <\infty}\lambda_l \sum_{ 1 \leq   i <n} g^2_l(X_i)}.\nonumber\\
\end{eqnarray*}
By (\ref{Pgxa}) and (\ref{nznl}), we have $P   (|g_l(X_{n})| >  z_{n,l}) =o(x_n^2/n)$. Then together with (\ref{a1b}),
\begin{eqnarray} \label {LI2.23a}
E_{n} e^{t   \phi_{n}} = 1 + o(x_n^2/n) = e^{o(x_n^2/n)}.
\end{eqnarray}
Applying (\ref{LI2.23a}) to (\ref{LI2.21a}), we have
\begin{eqnarray*}
E e^{t \sum_{j=2}^{n} \phi_j}= e^{o(x_n^2/n)} E e^{t \sum_{j=2}^{ n-1} \phi_j} .
\end{eqnarray*}
Similarly,
\begin{eqnarray*}
E e^{t \sum_{j=2}^{n-1} \phi_j}&=& E (e^{t \sum_{j=2}^{ n-2} \phi_j}E_{n-1} e^{t\phi_{n-1}})\nonumber\\
&=& e^{o(x_n^2/n)} E e^{t\sum_{j=2}^{n-2} \phi_j} .
\end{eqnarray*}
Continue this process from $X_{n}$ to $X_1$, we conclude
\begin{eqnarray}  \label {LI2.26b}
E e^{t \sum_{j=2}^{ n} \phi_j}= e^{ n \times o(x_n^2/n)} =e^{o(x^2_n)} .
\end{eqnarray}
Applying (\ref{LI2.26b}) to (\ref{LI2.20b}) and letting $t=16/\varepsilon^2$, we have
\begin{eqnarray}\label {LI2.29}
I_{2,1} \leq \exp (-3x_n^2).
\end{eqnarray}
By the same argument,
\begin{eqnarray} \label{LI2.34b}
I_{2,2} \leq \exp (-3x_n^2).
\end{eqnarray}
Combining (\ref{md2.15d}), (\ref{LI2.29}) and (\ref{LI2.34b}), we obtain the proposition.
\end{proof}


\subsection{Estimation of $I_3$}

Let $Y_1,..., Y_n$ be an independent copy of $X_1,...,X_n$. We will use the following lemma which is a Bernstein-type exponential inequality for degenerate {\it U}-statistics.


\begin{lemma} \label {GLZ}
((3.5) of Gin\a'{e},  Lata\l a and Zinn \cite{GLZ}). For bounded degenerate kernel $h_{i,j}(X_i, Y_j)$, let
\begin{eqnarray}
A=\max_{i,j} \left \| h_{i,j}(X_i, Y_j)   \right \|_{\infty}, \quad C^2 = \sum_{i,j} E h_{i,j}^2(X_i, Y_j), \qquad \qquad  \nonumber \\
B^2 = \max_{i,j} \bigg \{  \Big  \| \sum_i E h_{i,j}^2(X_i,y)  \Big  \|_{\infty}, \Big  \| \sum_j E h_{i,j}^2(x, Y_j)  \Big   \|_{\infty}  \bigg \}. \nonumber
\end{eqnarray}
Then there is a universal constant $K$ such that
\begin{eqnarray}
Pr \bigg \{\Big  |   \sum_{i,j} h_{i,j}(X_i, Y_j)   \Big  | >x     \bigg \}   \leq K \exp \left \{  -\frac{1}{K} \min \bigg [ \frac{x}{C},   \left ( \frac{x}{B}  \right )^{2/3}, \left ( \frac{x}{A}   \right )^{1/2} \bigg ]\right \}. \nonumber
\end{eqnarray}
\end{lemma}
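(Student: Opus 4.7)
Since the kernel $h_{i,j}(X_i,Y_j)$ is already a decoupled $U$-statistic of order two (with $\{X_i\}$ and $\{Y_j\}$ independent samples), no further decoupling step is required. The plan is to proceed by iterated conditioning combined with an $L_p$-moment analysis. First, condition on $\{X_i\}$ and set
\[
Z_j(X) := \sum_{i} h_{i,j}(X_i, Y_j), \qquad j = 1, \ldots, n.
\]
By degeneracy, $E_{Y_j} h_{i,j}(X_i, Y_j) = 0$, so each $Z_j(X)$ is centered and bounded, and the $Z_j(X)$ are independent across $j$ conditional on $X$. Applying the classical Bernstein inequality to $\sum_j Z_j(X)$ conditional on $X$ bounds $P(|\sum_{i,j} h_{i,j}(X_i,Y_j)| > x \mid X)$ by $2\exp(-x^2/(2\sigma^2(X)+\tfrac{2}{3}M(X)x))$, where $\sigma^2(X)=\sum_j E_{Y_j} Z_j^2(X)$ and $M(X)=\max_j \|Z_j(X)\|_\infty$.

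The next task is to show that $\sigma^2(X)$ and $M(X)$ concentrate around deterministic quantities controlled by $C^2$ and $B^2$ respectively. Expanding
\[
\sigma^2(X)=\sum_j\sum_{i,i'}E_{Y_j}\bigl[h_{i,j}(X_i,Y_j)h_{i',j}(X_{i'},Y_j)\bigr],
\]
the diagonal part $i=i'$ has expectation exactly $C^2$, while the off-diagonal contribution is itself a degenerate quadratic form in $X$, to which the same Bernstein scheme applies, with parameters inherited from $B$ and $A$. Analogously, $M(X)$ is controlled by Bernstein in the $X$-variable using the hypotheses on $B$ and $A$. Tracking the three scales $C$, $B$, $A$ through the iteration yields three competing rates in the exponent, matching the three terms appearing in the minimum.

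The main obstacle is to carry out this iteration with a single \emph{universal} constant $K$, independent of $n$ and of the particular kernels; naively propagating constants through two nested Bernstein applications becomes cumbersome and threatens the sharp exponents $2/3$ and $1/2$. A cleaner route, which bypasses the case analysis, is to establish a Rosenthal-type $L_p$ moment estimate of the form
\[
\Bigl\|\sum_{i,j} h_{i,j}(X_i, Y_j)\Bigr\|_p \le K\bigl(\sqrt{p}\,C + p^{3/2}\,B + p^2\,A\bigr), \qquad p\ge 2,
\]
with $K$ universal, and then convert moments to tails via Markov's inequality by choosing $p$ equal (up to a constant) to the minimum of the three expressions $(x/C)^2$, $(x/B)^{2/3}$, $(x/A)^{1/2}$ that appear in the stated exponent. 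The moment bound itself can be proved by induction on the order of the chaos, combining the built-in decoupling with Khintchine-type inequalities for Rademacher chaos of order two and a Hoffmann-J{\o}rgensen style truncation that separates the contributions of $C$, $B$, and $A$.
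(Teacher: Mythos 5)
The paper does not prove this lemma: it quotes inequality (3.5) of Gin\'e, Lata\l a and Zinn \cite{GLZ} verbatim and uses it as a black box, so there is no internal proof to compare your sketch against.

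Your sketch goes in the right general direction---GLZ do derive the tail bound from an $L_p$-moment estimate and then optimize over $p$---but the moment inequality you conjecture, $\|T\|_p \le K\bigl(\sqrt{p}\,C + p^{3/2}\,B + p^2\,A\bigr)$, is false. Take $h_{i,j}(X_i,Y_j) = n^{-1}\varepsilon_i\varepsilon_j'$ with i.i.d.\ Rademacher $\varepsilon_i,\varepsilon_j'$, $1\le i,j\le n$; this kernel is bounded and degenerate. Then $T=\sum_{i,j}h_{i,j}=(n^{-1/2}\sum_i\varepsilon_i)(n^{-1/2}\sum_j\varepsilon_j')$, and Khintchine's inequality gives $\|T\|_p\asymp p$ for $2\le p\le n$; meanwhile $C=1$, $B=n^{-1/2}$, $A=n^{-1}$, so your right-hand side is $\asymp\sqrt p$ for $p\le\sqrt n$, a factor $\sqrt p$ too small. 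The missing ingredient is the operator-norm parameter $D=\|h\|_{L^2\to L^2}$ (here $D=1=C$, which accounts for the discrepancy). GLZ's moment estimate has the shape $\|T\|_p\lesssim\sqrt p\,C+p\,D+p^{3/2}\,B+p^2\,A$, and the matching exponential inequality carries a fourth term $x/D$ inside the minimum. Since $D\le C$ always, one can replace $\sqrt p\,C+p\,D$ by $2p\,C$ for $p\ge1$, and the Markov optimization over $p$ then reproduces exactly the three-parameter bound with $x/C$ quoted in the lemma. So the correction to your plan is to let the $C$-term grow linearly in $p$ (equivalently, carry the $pD$ term explicitly and discard it at the end via $D\le C$); note also that your conjectured $\sqrt p\,C$, were it true, would yield the strictly stronger subgaussian exponent $(x/C)^2$ rather than the stated $x/C$, which is itself a warning sign that something is missing.
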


Recall \eqref{barest}.  Hence by (\ref{XY}) and the definition of $I_3$ in (\ref{m2.10}),
\begin{eqnarray}\label{I2.36}
I_3 \leq P \bigg ( \frac{ \sum_{l=m+1}^{\infty} \lambda_l (\bar{Y}_{n,l}-E \bar{Y}_{n,l})^2}{\sum_{l=m+1}^{\infty}\lambda_l L_l(z_{n,l})} \geq C_1  n (1-2 \varepsilon)^2  x_n^2  \bigg ).
\end{eqnarray}
Let
\begin{eqnarray}\label {hm}
h_m (X_i, Y_j)= \sum_{l=m+1}^{\infty} \lambda_l (\bar{g}_l(X_i)-E \bar{g}_l(X_i))(\bar{g}_l(Y_j)-E \bar{g}_l(Y_j)).
\end{eqnarray}
In addition to the estimate of $I_{3}$, we include (\ref{AC2.33}) in the following proposition which will be used in the proof of Theorem \ref{LIL}, where
\begin{eqnarray} \label {hbeta}
&&h_{(\beta) }(X_i, Y_j) \nonumber\\
&=& \sum_{l=1}^{\infty}\lambda_l  \left \{\big (g_{l}(X_i)I(|g_l(X_i)| \leq \beta   z_{n,l})-E  g_{l}(X_i) I(|g_l(X_i)| \leq \beta   z_{n,l})\right\} \nonumber\\
&& \times  \left\{(g_{l}(Y_j)I(|g_l(Y_j)| \leq \beta  z_{n,l})-E g_{l}(Y_j)\big )I(|g_l(Y_j)| \leq \beta  z_{n,l}) \right \}.
\end{eqnarray}


\begin{proposition} \label {supfV}
For constant $C_2>0$ sufficiently large,
\begin{eqnarray} \label {2.35a}
 P\bigg (  \frac{  \sum_{1 \leq i , j \leq n} h_m(X_i, Y_j)}{ n \sum_{l=m+1}^{\infty}\lambda_l   L_l (z_{n, l}) } \geq  C_2  x_n^2 \bigg) \leq \exp (-3 x_n^2),
\end{eqnarray}
Then by (\ref{I2.36}) and the decoupling inequalities of de la Pe\~{n}a and Montgomery-Smith \cite{dMS95}, for $C_1>0$ sufficiently large,
\begin{eqnarray*} \label {2.35b}
I_{3} \leq P \bigg (  \frac{  \sum_{1 \leq i , j \leq n} h_m(X_i, X_j)}{ n \sum_{l=m+1}^{\infty}\lambda_l   L_l (z_{n, l}) } \geq  C_1  (1-2 \varepsilon)^2 x_n^2  \bigg) \leq \exp (-2 x_n^2).
\end{eqnarray*}
Suppose that $\lambda_l>0$ for all $1 \leq l <\infty$ and $d>0$ is a constant. For constants $0<\alpha, \beta \leq 1$ sufficiently small,
\begin{eqnarray}\label {AC2.33}
P \bigg (  \frac{ \sum_{1 \leq i , j \leq [\alpha n]} h_{(\beta)}(X_i, Y_j)}{  n \sum_{l=1}^{\infty} \lambda_l L_l ( z_{n,l}) } \geq  d x_n^2 \bigg )\leq \exp (-2 x^2_{n}) .
\end{eqnarray}
\end{proposition}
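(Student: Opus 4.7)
The plan is to apply Lemma \ref{GLZ} (the Gin\'{e}-Lata\l{}a-Zinn Bernstein inequality for bounded degenerate {\it U}-statistics) to the decoupled kernel $h_m(X_i,Y_j)$, thereby obtaining \eqref{2.35a}, and then to pass to $I_3$ via the de la Pe\~{n}a-Montgomery-Smith decoupling inequality. The same template, with truncation radius $\beta z_{n,l}$ in place of $z_{n,l}$ and index range $[\alpha n]$ in place of $n$, will yield \eqref{AC2.33} once $\alpha,\beta$ are chosen small enough.

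First I would estimate the three parameters $A$, $B$, $C$ of Lemma \ref{GLZ} for $h_m$. The uniform bound $|\bar g_l-E\bar g_l|\le 2z_{n,l}$ combined with the identity $nL_l(z_{n,l})=x_n^2 z_{n,l}^2$ gives $A\le 4\sum_{l>m}\lambda_l z_{n,l}^2=4(n/x_n^2)\sum_{l>m}\lambda_l L_l(z_{n,l})$. For $C^2=n^2 Eh_m^2(X_1,Y_1)$, expanding the square and applying Cauchy-Schwarz to the $l,k$ double sum gives $Eh_m^2(X_1,Y_1)\le\bigl(\sum_{l>m}\lambda_l L_l(z_{n,l})\bigr)^2$, hence $C\le n\sum_{l>m}\lambda_l L_l(z_{n,l})$. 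For $B^2=n\sup_y E_{X_1}h_m^2(X_1,y)$, a second application of Cauchy-Schwarz against the deterministic factors, combined with $|\bar g_l(y)-E\bar g_l|\le 2z_{n,l}$, yields $B^2\le 4(n^2/x_n^2)\bigl(\sum_{l>m}\lambda_l L_l(z_{n,l})\bigr)^2$.

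Substituting $x=C_2 x_n^2\cdot n\sum_{l>m}\lambda_l L_l(z_{n,l})$, each of $x/C$, $(x/B)^{2/3}$, and $(x/A)^{1/2}$ becomes a constant multiple of $x_n^2$, the constants growing with $C_2$; choosing $C_2$ sufficiently large forces every term in the Bernstein minimum to exceed $3Kx_n^2$, proving \eqref{2.35a}. For the bound on $I_3$ we note that $h_m(X_i,X_j)$ is degenerate by the centering, so the de la Pe\~{n}a-Montgomery-Smith decoupling inequality converts the tail of $\sum_{i\ne j}h_m(X_i,X_j)$ into the tail of the decoupled version at a proportionally lower threshold; the leftover diagonal $\sum_i h_m(X_i,X_i)$ is a sum of independent nonnegative variables handled by a standard one-dimensional Bernstein estimate. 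Absorbing all the constants into $C_1$ yields the stated bound on $I_3$.

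The argument for \eqref{AC2.33} follows the identical scheme with two additional small parameters: truncation at $\beta z_{n,l}$ scales $A$ by $\beta^2$ and $B^2$ by $\beta^2$, while restricting to $1\le i,j\le[\alpha n]$ scales $C^2$ by $\alpha^2$ and $B^2$ by $\alpha$. With $x=dx_n^2\cdot n\sum_{l\ge 1}\lambda_l L_l(z_{n,l})$, each of the three Bernstein ratios becomes $x_n^2$ times a constant that diverges as $\alpha,\beta\to 0$, so choosing $\alpha,\beta$ sufficiently small delivers $\exp(-2x_n^2)$. The main obstacle is the careful bookkeeping of $A$, $B$, $C$ and the simultaneous verification of the three ratios; the computations themselves are routine once the truncation identity $nL_l(z_{n,l})=x_n^2 z_{n,l}^2$ is used systematically throughout.
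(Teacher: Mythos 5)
Your proposal is correct and takes essentially the same approach as the paper: apply the Gin\'{e}--Lata\l{}a--Zinn inequality (Lemma \ref{GLZ}) with the same estimates for $A$, $B$, $C$ (derived via the same Cauchy--Schwarz manipulations and the identity $nL_l(z_{n,l})=x_n^2 z_{n,l}^2$), the same choice of $x$, and the same scaling analysis in $\alpha,\beta$ for \eqref{AC2.33}. Your observation that passing from the coupled sum $\sum_{i,j}h_m(X_i,X_j)$ to the decoupled one requires separately controlling the diagonal $\sum_i h_m(X_i,X_i)$ (via a one-dimensional estimate for nonnegative i.i.d.\ summands) before invoking de la Pe\~{n}a--Montgomery-Smith is a careful detail that the paper's proof leaves implicit.
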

\begin{proof}
We will prove (\ref{2.35a}) and (\ref{AC2.33}) simultaneously. By (\ref{nznl}) and (\ref{hm}),
\begin{eqnarray}\label{A}
A_n& :=& \left \|h_m (X_i, Y_j) \right\|_{\infty} \leq 4\sum_{l=m+1}^{\infty} \lambda_l   z_{n,l}^2 = 4 \sum_{l=m+1}^{\infty} \lambda_l \frac{n L_l(z_{n,l})}{x_n^2}  .
\end{eqnarray}
 By (\ref{nznl}) and (\ref{hbeta}),
\begin{eqnarray}\label {Ad}
A_{n,(\beta) } := \left \|h_{(\beta) }(X_i, Y_j) \right\|_{\infty} \leq 4 \sum_{l=1}^{\infty}\lambda_l \beta^2 z^2_{n,l} = \frac{4 \beta^2 n \sum_{l=1}^{\infty}\lambda_lL_l(z_{n,l})}{x_n^2}.
\end{eqnarray}
Let
\begin{eqnarray}
B_n^2 := \max \bigg \{ \Big \| \sum_{1 \leq i \leq n} E h_m^2 (X_i, y)  \Big  \|_{\infty}, \ \Big  \| \sum_{1 \leq j \leq n} E h_m^2(x, Y_j)  \Big  \|_{\infty}  \bigg \}.  \nonumber
\end{eqnarray}
Since $|\bar{g}_l(Y_j)|\leq z_{n,l} $, then by Cauchy-Schwarz inequality, (\ref{nznl}) and (\ref{hm}),
\begin{eqnarray*}\label {AC2.38}
&& \Big  \| \sum_{1 \leq i \leq n } E h_m^2(X_i, y)\Big  \|_{\infty}\leq nE \bigg (2 \sum_{l=m+1}^{\infty}\lambda_l |\bar{g}_l(X_1) - E \bar{g}_l(X_1)|  z_{n,l} \bigg  )^2    \nonumber \\
&\leq & 4 n  E   \sum_{l=m+1}^{\infty}\lambda_l  \big(\bar{g}_l(X_1) - E \bar{g}_l(X_1)\big)^2   \sum_{l=m+1}^{\infty}\lambda_l z^2_{n,l}   \nonumber \\
&\leq& 4 n  \sum_{l=m+1}^{\infty} \lambda_l L_l(z_{n,l})  \sum_{l=m+1}^{\infty} \lambda_l \frac{n L_l(z_{n,l})}{x_n^2} .
\end{eqnarray*}
The same result can be obtained for $\| \sum_{1 \leq j \leq n } E h_m^2(x, Y_j)  \|_{\infty}$. Therefore
\begin{eqnarray} \label{B2}
B_n^2 \leq  \frac{4   n^2 (\sum_{l=m+1}^{\infty}\lambda_l L_l(z_{n,l}))^2}{x_n^2}.
\end{eqnarray}
Similarly,
\begin{eqnarray*} \label {Bd}
B_{n, \alpha, (\beta)}^{2} &:= &\max \bigg \{ \Big \| \sum_{1 \leq i \leq [\alpha  n]} E h_{(\beta)}^{2} (X_i, y)  \Big  \|_{\infty}, \Big \| \sum_{1 \leq j \leq [\alpha  n]} E h_{(\beta)}^{2}(x, Y_j)  \Big  \|_{\infty}  \bigg \} \nonumber \\
&\leq& 4 \alpha n \sum_{l=1}^{\infty} \lambda_l L_l(\beta z_{n,l})  \sum_{l=1}^{\infty} \lambda_l \beta^2 \frac{n L_l(z_{n,l})}{x_n^2}.
\end{eqnarray*}
Since $0 < \beta \leq 1$, then $L_l(\beta z_{n,l})/L_l(z_{n,l}) \leq 1$. Hence
\begin{eqnarray*}
B_{n, \alpha, (\beta)}^{2} \leq \frac{4   \alpha   \beta^2    n^2 (\sum_{l=1}^{\infty}\lambda_l L_l(z_{n,l}))^2}{x_n^2}.
\end{eqnarray*}
By (\ref{hm}) and the Cauchy-Schwarz inequality,
\begin{eqnarray} \label {C2a}
C_{n}^2 &:=&\sum_{1 \leq i ,  j \leq n} E h_m^2(X_i, Y_j) \nonumber\\
&\leq&  \sum_{1 \leq i, j \leq n}   \sum_{l=m+1}^{\infty} \lambda_ l E\big (\bar{g}_l(X_i) - E \bar{g}_l(X_i)\big )^2  \sum_{l=m+1}^{\infty} \lambda_l E\big (\bar{g}_l(Y_j) - E \bar{g}_l(Y_j)\big )^2  \nonumber \\
&\leq&  n^2 \Big (\sum_{l=m+1}^{\infty} \lambda_l L_l(z_{n,l})   \Big )^2 .
\end{eqnarray}
Similarly,
\begin{eqnarray}\label {Cd}
C_{n,\alpha, (\beta) }^2 &:=&  \sum_{1 \leq i \neq  j \leq [\alpha  n]} E h_{(\beta)}^2(X_i, Y_j) \nonumber\\
&\leq&    \alpha^2   n^2  \bigg(\sum_{l=1}^{\infty}\lambda_l L_l(\beta   z_{n,l}) \bigg)^2 \nonumber\\
&\leq&   \alpha^2    n^2  \bigg (\sum_{l=1}^{\infty}\lambda_l L_l( z_{n,l}) \bigg)^2 .
\end{eqnarray}
Now let
\begin{eqnarray}\label {x}
x=  C_2 n x_n^2 \sum_{l=m+1}^{\infty} \lambda_l L_l(z_{n,l}).
\end{eqnarray}
By (\ref{A}) and (\ref{x}),
\begin{eqnarray} \label {xA}
\left ( \frac{x}{A_n} \right )^{1/2}\geq\left (  \frac{  C_2   n  x_n^2  \sum_{l=m+1}^{\infty}  \lambda_l L_l(z_{n,l})}{4  n  \sum_{l=m+1}^{\infty} \lambda_l L_l(z_{n,l})/ x_n^2}   \right )^{1/2} =   \left ( C_2  /4\right )^{1/2} x_n^2. \nonumber
\end{eqnarray}
By (\ref{B2}) and (\ref{x}),
\begin{eqnarray}\label {xB}
\left (  \frac{x}{B_n} \right )^{2/3} \geq \left (  \frac{ C_2  n x_n^2  \sum_{l=m+1}^{\infty} \lambda_l L_l(z_{n,l})}{2  n  \sum_{l=m+1}^{\infty} \lambda_l L_l(z_{n,l})/ x_n} \right )^{2/3} = ( C_2   /2)^{2/3}x_n^2.  \nonumber
\end{eqnarray}
By (\ref{C2a}) and (\ref{x}),
\begin{eqnarray}\label {xC}
\frac{x}{C_n} \geq \frac{ C_2   n x_n^2   \sum_{l=m+1}^{\infty} \lambda_l L_l(z_{n,l})}{   n  \sum_{l=m+1}^{\infty} \lambda_l  L_l(z_{n,l})} =  C_2  x_n^2. \nonumber
\end{eqnarray}
Then (\ref{2.35a}) follows from Lemma \ref{GLZ} for sufficiently large $C_2$. Similarly, let
\begin{eqnarray}\label {xd}
x_{d}= d  n x^2_{n} \sum_{l=1}^{\infty}\lambda_l L_l(z_{n,l}) .
\end{eqnarray}
By (\ref{Ad}) and (\ref{xd}),
\begin{eqnarray}
\left ( \frac{x_{d}}{A_{n, (\beta)}} \right )^{1/2}\geq\left (  \frac{d   n  x^2_{n}  \sum_{l=1}^{\infty}\lambda_l  L_l(z_{n,l}) }{4   \beta^2  n \sum_{l=1}^{\infty} \lambda_l L_l(z_{n,l})/x_{n}^2}   \right )^{1/2} =   \frac{  \sqrt{d} }{2  \beta   }  x_{n}^2. \nonumber
\end{eqnarray}
By (\ref{Bd}) and (\ref{xd}),
\begin{eqnarray}
\left (  \frac{x_{d}}{B_{n, \alpha, (\beta)}} \right )^{2/3} \geq \left (  \frac{d n x^2_{n} \sum_{l=1}^{\infty}\lambda_l  L_l(z_{n,l})}{2\sqrt{ \alpha }    \beta n  \sum_{l=1}^{\infty}\lambda_l L_l(z_{n,l}) /x_{n}} \right )^{2/3} =\left( \frac{d }{2\sqrt{ \alpha }  \beta }\right)^{2/3}x_{n}^2. \nonumber
\end{eqnarray}
By (\ref{Cd}) and (\ref{xd}),
\begin{eqnarray}
\frac{x_{d}}{C_{n,\alpha, (\beta)}} \geq \frac{d n x^2_{n} \sum_{l=1}^{\infty} \lambda_l L_l(z_{n,l})}{   \alpha  n  \sum_{l=1}^{\infty}\lambda_l  L_l(z_{n,l})} =   \frac{d }{ \alpha  } x_{n}^2 . \nonumber
\end{eqnarray}
Therefore (\ref{AC2.33}) follows from Lemma \ref {GLZ} for $\alpha$ and $\beta$ sufficiently small.
\end{proof}
\bigskip


\subsection {Estimation of $I_4$}


%

Lemma \ref{Einmahl} below fellows Corollary 1(b) of Einmahl \cite{Einmahl} and Lemma 4.2 of Lin and Liu \cite{LL}. However, we add the condition $\sum_{i=1}^{k_n} E\| \xi_{n,i} \|^2 \leq b^2_n$ and our result is in a form of exponential inequality for independent random vectors. We use the same positive constants $c_{17}, c_{20}$ and $c_{22}$ (depending only on the vector dimension $d$) in Einmahl \cite{Einmahl}.
\begin{lemma} \label {Einmahl}
Let $\xi_{n,1}, ..., \xi_{n,k_n}$ be independent random vectors with mean zero and values in $\mathbb{R}^d$ such that $\|\xi_{n,i}\| \leq A_n$ and $\sum_{i=1}^{k_n} E\| \xi_{n,i} \|^2 \leq b^2_n$, where $\|\cdot\|$ denotes the Euclidean norm. Let $S_n = \sum_{i=1}^{k_n} \xi_{n,i}$. Suppose that
\begin{eqnarray}\label {K4.1c}
 Cov (S_n) = B_n I_d
\end{eqnarray}
where $B_n >0$ and $I_d$ is a $d \times d$ identity matrix, and $\alpha_n$ is a positive sequence such that $ \alpha_n B_n^{1/2}\rightarrow \infty$ and
\begin{eqnarray}\label {K4.2d}
\alpha_n\sum_{i=1}^{k_n}  E \left \{ \| \xi_{n,i} \|^3 \exp \left (  \alpha_n  \|\xi_{n,i}\|\right ) \right \} \leq B_n.
\end{eqnarray}
Let
\begin{eqnarray}\label {betan}
\beta_n  = B_n^{-3/2} \sum_{i=1}^{k_n} E \left \{ \| \xi_{n,i} \|^3 \exp \left (  \alpha_n \|\xi_{n,i}\|\right ) \right \}.
\end{eqnarray}
Then for any $0 < \gamma <1$, there exists $n_{\gamma}$ such that for all $n \geq n_{\gamma}$,
\begin{eqnarray}\label {K4.2b}
 P \left (\|S_n\| \geq  x  \right) &\leq& \exp \left \{ c_{20} \beta_n \left ( c_{17}^3 \alpha_n^3 B_n^{3/2}+1 \right) \right \} \nonumber\\
&&\times \bigg \{   \exp \left ( - \frac{(1-\gamma)^6 x^2}{2 B_n}  \right )  + \exp \left ( - \frac{\gamma^3 (1-\gamma)^3 x^2}{2  c_{22}B_n \beta_n ^2 \log (1/\beta_n)} \right )  \bigg \} \nonumber \\
&& + 2d \exp \left ( - \frac{(1-\gamma)^2 c^2_{17}  \alpha^2_n B^2_n }{2 (d^2 b^2_n +d c_{17} A_n\alpha_n B_n)}   \right ) \nonumber
\end{eqnarray}
uniformly for $x \in [e_n B_n^{1/2}, c_{17} \alpha_n B_n]$, where $\{e_n  \}_{n \geq 1}$ can be any sequence with $e_n \rightarrow \infty$ and $e_n \leq c_{17}\alpha_n B_n^{1/2}$.
\end{lemma}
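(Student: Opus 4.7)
The plan is to derive this lemma as a refinement of Einmahl's Corollary 1(b) \cite{Einmahl}, which already produces the two exponential terms inside the curly braces. The only genuinely new contribution, compared to Einmahl's statement, is the extra summand $2d\exp(\cdots)$ that involves $A_n$ and $b_n^2$; this is what allows us to trade Einmahl's moment condition for the somewhat stronger but more convenient boundedness-plus-variance hypothesis that fits naturally into our later applications. Thus the task splits cleanly into two pieces.

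First I would reproduce Einmahl's argument verbatim to obtain the main body of the bound. Einmahl's proof uses Cram\'er-type exponential tilting applied to $S_n/\sqrt{B_n}$ combined with a careful truncation at a level comparable to $1/\alpha_n$. Under (\ref{K4.1c}) and (\ref{K4.2d}), and with $\beta_n$ as defined in (\ref{betan}), this exactly yields the factor $\exp\{c_{20}\beta_n(c_{17}^3\alpha_n^3 B_n^{3/2}+1)\}$ together with the two exponential terms in $(1-\gamma)^6 x^2/(2B_n)$ and $\gamma^3(1-\gamma)^3 x^2/(2c_{22}B_n\beta_n^2\log(1/\beta_n))$, uniformly for $x\in[e_n B_n^{1/2},c_{17}\alpha_n B_n]$. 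The added assumption $\sum E\|\xi_{n,i}\|^2\le b_n^2$ is not needed for this part; it enters only in the second step.

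Second I would handle the extra truncation term. For any threshold $\tau\le c_{17}\alpha_n B_n$, Einmahl's proof implicitly requires controlling the event where the tilted sum's individual coordinate pieces blow up; this is the place where the $\|\xi_{n,i}\|\le A_n$ bound and the variance sum $b_n^2$ are exploited. Concretely, for each coordinate $k=1,\dots,d$ the sum $S_n^{(k)}=\sum_i\xi_{n,i}^{(k)}$ consists of independent, mean-zero, bounded terms with $|\xi_{n,i}^{(k)}|\le A_n$ and $\sum_i E(\xi_{n,i}^{(k)})^2\le b_n^2$. Applying the classical Bernstein inequality at threshold $\tau/d$, taking a two-sided bound, and a union bound over the $d$ coordinates produces a contribution of the form
\[
2d\exp\!\left(-\frac{(\tau/d)^2}{2(b_n^2+A_n\tau/d)}\right).
\]
With the choice $\tau=(1-\gamma)c_{17}\alpha_n B_n$, this rearranges into the third summand stated in the lemma. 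I would then combine this with the output of step one to obtain the complete inequality.

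The main obstacle is bookkeeping: namely, aligning our truncation threshold with the implicit truncation in Einmahl's derivation so that the Bernstein coordinate-wise bound genuinely replaces the portion of his argument that required the stronger moment hypothesis. One has to verify that $\alpha_n B_n^{1/2}\to\infty$ together with (\ref{K4.2d}) forces $\beta_n\to0$, that the constants $c_{17},c_{20},c_{22}$ chosen by Einmahl remain admissible after the refinement, and that the upper endpoint $c_{17}\alpha_n B_n$ of the allowed range for $x$ is consistent with the truncation level used in the coordinate-wise Bernstein step. Provided these compatibility conditions hold for all sufficiently large $n\ge n_\gamma$, the three contributions combine to give the stated bound.
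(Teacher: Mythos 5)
Your high-level plan — Einmahl's Corollary 1(b) for the main body, a coordinate-wise Bernstein step for the extra term, combined via a $\gamma$-split at the threshold $(1-\gamma)c_{17}\alpha_n B_n$ — is the same approach the paper takes, and your treatment of the third summand is essentially correct: the paper bounds $\|S_n\|\le\sum_{l=1}^d|\sum_i\xi_{n,i}^{(l)}|$, applies Bernstein coordinate-wise with $|\xi_{n,i}^{(l)}|\le A_n$ and $\sum_iE(\xi_{n,i}^{(l)})^2\le b_n^2$ at threshold $(1-\gamma)c_{17}\alpha_n B_n/d$, and takes a union bound over coordinates, exactly as you describe.

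Your step one, however, understates what has to be supplied beyond Einmahl. Corollary 1(b) of Einmahl is not a tail bound: it is a local (density) representation $p_n(y)=\phi_{(1+\sigma^2)I_d}(y)\exp(T_n(y))$ with $|T_n(y)|\le c_{20}\beta_n(\|y\|^3+1)$, valid only on $\|y\|\le c_{17}\alpha_n B_n^{1/2}$, and only for the \emph{Gaussian-smoothed} sum $B_n^{-1/2}\sum_i(\xi_{n,i}+\eta_{n,i})$ where $\eta_{n,i}\sim N(0,\sigma^2\mathrm{Cov}(\xi_{n,i}))$ are auxiliary variables with the specific variance $\sigma^2=c_{22}\beta_n^2\log(1/\beta_n)$. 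Your proposal never mentions the smoothing variables $\eta_{n,i}$, yet they are indispensable: without them $S_n$ may fail to have a density and the corollary does not apply. Once you add them, the $\gamma$-decomposition is not the one your writeup implies. The paper splits $P(\|S_n\|\ge x)$ into (i) the event $(1-\gamma)x\le\|S_n+\sum\eta_{n,i}\|<c_{17}\alpha_n B_n$, handled by integrating the density formula and bounding $P(\|N\|\ge(1-\gamma)^2x/B_n^{1/2})$ and $P(\sigma\|N\|\ge\gamma(1-\gamma)x/B_n^{1/2})$ with a $\chi_d^2$ tail bound — this is where $(1-\gamma)^6$ and $\gamma^3(1-\gamma)^3$ come from; (ii) the Gaussian overflow $\|\sum\eta_{n,i}\|\ge\gamma x$, using that $\sum\eta_{n,i}\sim N(0,\sigma^2B_nI_d)$; and (iii) the out-of-range event $\|S_n\|\ge(1-\gamma)c_{17}\alpha_n B_n$, which is your Bernstein piece. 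Attributing the first two exponential terms, $\gamma$ dependence and all, directly to "Einmahl's argument verbatim" elides a genuine chunk of work (the smoothing construction, the verification that $\beta_n\to0$ so $\sigma\to0$, the three-way split, and the $\chi_d^2$ estimates), which the paper takes from the Lin--Liu adaptation rather than from Einmahl. So: same route, same key lemma, same Bernstein add-on — but your step one needs to be fleshed out with the smoothing and the explicit $J_1,J_2,J_3$ decomposition before it is an actual proof.
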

\begin{proof}
Let $\eta_{n,i}$, $1 \leq i \leq k_n$, be independent $N(0, \sigma^2 Cov(\xi_{n,i}))$ random vectors, which are independent of the $ \xi_{n,i}$'s, where
\begin{eqnarray}\label {K4.4d}
\sigma^2 =  c_{22} \beta_n ^2 \log (1/\beta_n).
\end{eqnarray}
By (\ref{K4.2d}) and (\ref{betan}), we have $\beta_n \leq \alpha_n^{-1}B_n^{-1/2} \rightarrow 0$ as $n \rightarrow \infty$. Hence $\sigma \rightarrow 0$ as $n \rightarrow \infty$. Let $p_n(y)$ be the probability density of $B_n^{-1/2}\sum_{i=1}^{k_n} (\xi_{n,i} + \eta_{n,i})$, and $\phi_{(1+\sigma^2)I_d}$ be the density of $N(0, (1+\sigma^2) I_d)$. By Corollary 1(b) in Einmahl \cite{Einmahl} (together with the Remark on page 32), for $\|y\| \leq c_{17} \alpha_n B_n^{1/2} $,
\begin{eqnarray} \label {K4.8c}
p_n(y) = \phi_{(1+\sigma^2)I_d} (y) \exp (T_n(y)) \;\;\;   \textrm{with }\;\;\; |T_n(y)| \leq c_{20} \beta_n (\|y\|^3+1).
\end{eqnarray}
For any $0<\gamma <1$ and $x \in [e_nB_n^{1/2}, c_{17} \alpha_n B_n]$,
\begin{eqnarray}\label {K4.6b}
&& P \left (\|S_n\| \geq  x  \right) \nonumber \\
&\leq&   P \bigg ( \bigg \|S_n +\sum_{i=1}^{k_n} \eta_{n,i} \bigg\| \geq (1-\gamma) x  \bigg)    +     P \bigg ( \bigg \|\sum_{i=1}^{k_n} \eta_{n,i}  \bigg \| \geq \gamma x\bigg )        \nonumber \\
&=& P \bigg ((1-\gamma)x \leq  \bigg \|S_n +\sum_{i=1}^{k_n} \eta_{n,i} \bigg\|  <   c_{17} \alpha_n B_n  \bigg) \nonumber \\
&&+ P \bigg ( \bigg \|S_n +\sum_{i=1}^{k_n} \eta_{n,i} \bigg\|  \geq   c_{17} \alpha_n B_n  \bigg)  +     P \bigg ( \bigg \|\sum_{i=1}^{k_n} \eta_{n,i}  \bigg  \| \geq \gamma x\bigg )    \nonumber \\
&\leq& P \bigg ((1- \gamma)x \leq  \bigg \|  S_n + \sum_{i=1}^{k_n} \eta_{n,i} \bigg \| <  c_{17} \alpha_n B_n \bigg ) \nonumber \\
&& + P \left (\|S_n\|  \geq  (1-\gamma)c_{17} \alpha_n B_n    \right) \nonumber \\
&&+  P \bigg ( \bigg \|\sum_{i=1}^{k_n} \eta_{n,i}  \bigg \| \geq \gamma  c_{17} \alpha_n B_n \bigg ) +  P \bigg ( \bigg \|\sum_{i=1}^{k_n} \eta_{n,i}  \bigg \| \geq \gamma x\bigg )  \nonumber \\
&&  \nonumber \\
&\leq& P \bigg ((1- \gamma)x \leq  \bigg \|  S_n + \sum_{i=1}^{k_n} \eta_{n,i} \bigg \| <  c_{17} \alpha_n B_n \bigg )  +  2 P \bigg ( \bigg \|\sum_{i=1}^{k_n} \eta_{n,i}  \bigg \| \geq \gamma x\bigg) \nonumber \\
&&+ P \left (\|S_n\|  \geq  (1-\gamma)c_{17} \alpha_n B_n  \right)  \nonumber\\
&:=& J_1 +J_2 +J_3.
\end{eqnarray}
Let $N$ denote a centered normal random vector with covariance matrix $I_d$. Then by (\ref{K4.8c}),
\begin{eqnarray} \label {K4.9b}
&&J_1= \int_{ (1-\gamma)x/B_n^{1/2} < \|y\| \leq c_{17}\alpha_n B_n^{1/2}} \phi_{(1+\sigma^2)I_d} (y) \exp (T_n(y))dy \nonumber \\
&&\leq \exp \left \{ c_{20} \beta_n \left ( c_{17}^3 \alpha_n^3 B_n^{3/2}+1 \right )\right \} \int_{\|y\| \geq (1-\gamma)x/B_n^{1/2}} \phi_{(1+\sigma^2)I_d} (y) dy \nonumber \\
&&\leq  \exp \left \{  c_{20} \beta_n \left (  c^3_{17} \alpha_n^3 B_n^{3/2} +1 \right )  \right \}  \times\nonumber \\
&& \left \{ P \left (  \|N\| \geq (1-\gamma)^2x/B_n^{1/2} \right ) + P \left ( \sigma \|N\| \geq \gamma (1-\gamma) x/B_n^{1/2}  \right ) \right \}.
\end{eqnarray}
Observe that $\|N\|^2$ has $\chi_d^2$ distribution. It is well-known that for a  $\chi_d^2$ random variable $Y$, $P(Y>y)\le (ye^{1-y/d}/d)^{d/2}$ for $y>d$. Hence
\begin{align} \label {PNx}
& P \left (  \|N\| \geq  (1-\gamma)^2x/B_n^{1/2} \right ) \nonumber \\
& \leq \left(\frac{(1-\gamma)^4 x^2}{d B_n} \exp \left(1 - \frac{(1-\gamma)^4 x^2}{d B_n}  \right)\right)^{d/2}\nonumber \\
&=\frac{(1-\gamma)^{2d} x^d}{d^{d/2} B_n^{d/2}} \exp \left(d/2- \frac{(1-\gamma)^4 x^2}{2 B_n}  \right) \nonumber \\
&\leq \exp \bigg( - \frac{(1-\gamma)^6 x^2}{2 B_n}  \bigg )
\end{align}
by $x^2/B_n\rightarrow \infty$.
Similarly,
\begin{eqnarray} \label {K4.11e}
&& P\left (  \sigma \|N\| \geq \gamma (1-\gamma)x/B_n^{1/2} \right )  \leq \exp \left ( - \frac{\gamma^3 (1-\gamma)^3 x^2}{2 B_n \sigma^2} \right ) \nonumber \\
&=& \exp \left ( - \frac{\gamma^3 (1-\gamma)^3 x^2}{2 c_{22}B_n \beta_n ^2 \log (1/\beta_n)} \right )
\end{eqnarray}
 by (\ref{K4.4d}). Then by (\ref{K4.9b})-(\ref{K4.11e}), we have
\begin{eqnarray} \label {K4.13h}
 J_1&\leq& \exp \left \{ c_{20} \beta_n \left ( c_{17}^3 \alpha_n^3 B_n^{3/2}+1 \right) \right \} \nonumber \\
&& \times  \bigg \{   \exp \left ( - \frac{(1-\gamma)^6 x^2}{2 B_n}  \right )  + \exp \left ( - \frac{\gamma^3 (1-\gamma)^3 x^2}{2  c_{22}B_n \beta_n ^2 \log (1/\beta_n)} \right )  \bigg \}. \ \ \
\end{eqnarray}
Since the distribution of $\eta_{n,i}$ is $N(0, \sigma^2 Cov(\xi_{n,i}))$, then the distribution of $\sum_{i=1}^{k_n} \eta_{n,i}$ is $N(0, \sigma^2 \sum_{i=1}^{k_n} Cov(\xi_{n,i}))$. Since the $\xi_{n,i}$'s are independent, then
$ \sum_{i=1}^{k_n} Cov(\xi_{n,i})= Cov(\sum_{i=1}^{k_n} \xi_{n,i})=B_n I_d$ by (\ref{K4.1c}). Hence the distribution of $\sum_{i=1}^{k_n}\eta_{n,i}$ is $N(0, \sigma^2 B_n I_d)$. Then similar to (\ref{K4.11e}),
\begin{eqnarray}\label {K4.14h}
J_2 &=& 2P \bigg ( \bigg \|\sum_{i=1}^{k_n} \eta_{n,i}  \bigg \| \geq \gamma x\bigg )= 2 P \Big(  \sigma \|N\| \geq \gamma x/B_n^{1/2} \Big ) \ \nonumber \\
&\leq& 2   \exp \left ( - \frac{\gamma^3 x^2}{2 B_n c_{22} \beta_n^2 \log (1/\beta_n)  } \right ).
\end{eqnarray}
By (\ref{K4.13h}) and (\ref{K4.14h}), we have
\begin{eqnarray} \label {I1I2}
&&J_1 +J_2 \leq \exp \left \{ c_{20} \beta_n \left ( c_{17}^3 \alpha_n^3 B_n^{3/2} +1 \right) \right \} \nonumber \\
&& \times \bigg \{   \exp \left ( - \frac{(1-\gamma)^6 x^2}{2 B_n}  \right )  + 3\exp \left ( - \frac{\gamma^3 (1-\gamma)^3 x^2}{2  c_{22}B_n \beta_n ^2 \log (1/\beta_n)} \right )  \bigg \}.
\end{eqnarray}
Next we estimate $J_3$. For each $1 \leq i \leq n$, let $\xi_{n,i} = (\xi_{n,i}^{(1)}, ..., \xi_{n,i}^{(d)})^T $ where $\boldsymbol{a}^T$ denote the transpose of a vector $\boldsymbol{a}$. Then
\begin{eqnarray}\label {K4.14p}
\left \|  S_n \right \| = \bigg \|\sum_{i=1}^{k_n} \xi_{n,i} \bigg \|= \bigg (  \sum_{l=1}^d \Big (  \sum_{i=1}^{k_n} \xi^{(l)}_{n,i} \Big )^2 \bigg )^{1/2} \leq \sum_{l=1}^d \bigg |\sum_{i=1}^{k_n} \xi^{(l)}_{n,i}   \bigg |. \nonumber
\end{eqnarray}
Hence
\begin{eqnarray}
J_3 &=& P \left (\|S_n\|  \geq  (1-\gamma) c_{17} \alpha_n B_n   \right)  \nonumber \\
&\leq& P \bigg (\sum_{l=1}^d \bigg |\sum_{i=1}^{k_n} \xi_{n,i}^{(l)}   \bigg |  \geq  (1-\gamma) c_{17} \alpha_n B_n   \bigg)  \nonumber \\
&\leq& \sum_{l=1}^d  P \bigg ( \bigg |\sum_{i=1}^{k_n} \xi^{(l)}_{n,i}   \bigg |  \geq  \frac{(1-\gamma) c_{17} \alpha_n B_n }{d}   \bigg). \nonumber
\end{eqnarray}
Since $\|\xi_{n,i}\| \leq A_n$ and $\sum_{i=1}^{k_n}E \| \xi_{n,i} \|^2 \leq b^2_n$, then $|\xi^{(l)}_{n,i}| \leq \|\xi_{n,i}\| \leq A_n$ and $\sum_{i=1}^{k_n}E (\xi^{(l)}_{n,i})^2 \leq \sum_{i=1}^{k_n}E \|\xi_{n,i} \|^2 \leq b_n^2$ for each $1 \leq l \leq d$. By Bernstein's inequality (e.g., (2.17) of de la Pe\~{n}a, Lai and Shao \cite{dLS}),
\begin{eqnarray}\label {K4.15h}
J_3 &\leq&  2 d  \exp \left ( - \frac{(1-\gamma)^2c^2_{17} \alpha^2_n B^2_n }{2 d^2 (b^2_n + c_{17} A_n\alpha_n B_n/d)}   \right ).
\end{eqnarray}
Then the lemma follows by applying (\ref{I1I2}) and (\ref{K4.15h}) to (\ref{K4.6b}).
\end{proof}
\bigskip

Now we estimate $I_{4}$ in the following proposition which uses some ideas in Liu and Shao \cite{LS}.

\begin{proposition} \label {Lsup2}
\begin{eqnarray} \label {2.83}
I_{4}&\le &P \bigg ( \sum_{l=1}^{m} \lambda_l \left (\bar{Y}_{n,l} - E \bar{Y}_{n,l}   \right )^2\geq  (1-C_1\varepsilon)(1-2 \varepsilon)^3 n x_n^2 \max_{1\le l \le m} \lambda_l L_l (z_{n, l}) \bigg )\nonumber \\
&\leq& \exp \bigg ( -  \frac{  (1-C_1\varepsilon) (1-2\varepsilon)^4 x_n^2}{2(1+ \varepsilon)}   \bigg ). \nonumber
\end{eqnarray}
\end{proposition}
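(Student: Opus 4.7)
My plan is to reduce the event to a Gaussian-type tail for the norm of an $m$-dimensional random vector and then invoke Lemma \ref{Einmahl}. For the first inequality, I write $\sqrt{\lambda_l}\bar{Y}_{n,l}=\sqrt{\lambda_l}(\bar{Y}_{n,l}-E\bar{Y}_{n,l})+\sqrt{\lambda_l}\,nE\bar{g}_l(X_1)$ and apply (\ref{XY}) with $a=\varepsilon$ to $\sum_{l=1}^m(\sqrt{\lambda_l}\bar{Y}_{n,l})^2$. By (\ref{barest}), $\sum_{l=1}^m \lambda_l(nE\bar{g}_l)^2=o(n x_n^2 m M_n)$ with $M_n:=\max_{l\le m}\lambda_l L_l(z_{n,l})$; since $m$ is fixed, this is eventually smaller than $\varepsilon^2$ times the threshold defining $I_4$, so the mean-part probability in (\ref{XY}) is zero for large $n$, yielding the first displayed inequality of the proposition.

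For the tail bound, I form $V=(V_1,\ldots,V_m)^T$ with $V_l=\sqrt{\lambda_l}(\bar{Y}_{n,l}-E\bar{Y}_{n,l})=\sum_{i=1}^n \xi_{n,i}^{(l)}$, where $\xi_{n,i}^{(l)}=\sqrt{\lambda_l}(\bar{g}_l(X_i)-E\bar{g}_l(X_i))$, so the event becomes $\|V\|^2\ge T:=(1-C_1\varepsilon)(1-2\varepsilon)^3 n M_n x_n^2$. Using (\ref{a2}) together with the standard truncation identities, the covariance matrix factors as $\Sigma:=\mathrm{Cov}(V)=n D_n R_n D_n$ where $D_n=\mathrm{diag}(\sqrt{\lambda_l L_l(z_{n,l})})$ and $R_n\to I_m$ entrywise. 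Since $m$ is fixed, $\lambda_{\max}(\Sigma)\le(1+\varepsilon)n M_n$ for all large $n$, so letting $W:=\Sigma^{-1/2}V$ (whose covariance is $I_m$) and using $\|V\|^2\le \lambda_{\max}(\Sigma)\|W\|^2$ gives
\[
I_4\le P\Big(\|W\|^2\ge \tfrac{(1-C_1\varepsilon)(1-2\varepsilon)^3}{1+\varepsilon}\,x_n^2\Big).
\]

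Finally I would apply Lemma \ref{Einmahl} to $W=\sum_{i=1}^n\tilde\xi_{n,i}$ with $\tilde\xi_{n,i}=\Sigma^{-1/2}\xi_{n,i}$. Factoring $\Sigma^{-1/2}=R_n^{-1/2}D_n^{-1}/\sqrt n$ and using (\ref{nznl}) to bound $\|D_n^{-1}\xi_{n,i}\|^2\le 4\sum_{l=1}^m z_{n,l}^2/L_l(z_{n,l})=4mn/x_n^2$, one obtains $\|\tilde\xi_{n,i}\|\le A_n=O(1/x_n)$ (this factorization bypasses any issue from a small $\lambda_{\min}(\Sigma)/(nM_n)$ ratio); in addition $b_n^2:=\sum_iE\|\tilde\xi_{n,i}\|^2=m$ and $B_n=1$. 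I would choose $\alpha_n=cx_n$ with $c>0$ small enough to satisfy hypothesis (\ref{K4.2d}), and $\gamma>0$ small enough that $(1-\gamma)^6\ge 1-2\varepsilon$. The principal Gaussian term of Lemma \ref{Einmahl} then yields $\exp(-(1-\gamma)^6 x^2/2)\le \exp(-(1-C_1\varepsilon)(1-2\varepsilon)^4 x_n^2/(2(1+\varepsilon)))$; the $\beta_n^2\log(1/\beta_n)$ term is $\exp(-\Omega(x_n^4/\log x_n))$ since $\beta_n=O(1/x_n)$; and the Bernstein-type tail $\exp(-\Omega(x_n^2))$ is absorbed by choosing $c$ suitably. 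The delicate point is calibrating $\alpha_n$, $\gamma$, and $c$ so that the correction prefactor $\exp(c_{20}\beta_n(c_{17}^3\alpha_n^3+1))\sim \exp(c_{20}c_{17}^3 c^3 x_n^2)$ fits inside the $\varepsilon$-slack already built into the target exponent, while simultaneously keeping $c_{17}\alpha_n\ge x$ (so Lemma \ref{Einmahl} applies at $x\sim x_n$); the dependence of $c_{17}, c_{20}, c_{22}$ on $m$ is what makes this the main technical hurdle.
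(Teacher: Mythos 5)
Your overall strategy --- form the $m$-dimensional vector of centred truncated partial sums, whiten with $\Sigma^{-1/2}$, invoke Lemma \ref{Einmahl}, and pass back to $\|V\|^2$ via an eigenvalue/quadratic-form bound on $\Sigma$ --- is essentially what the paper does (the paper works with $B_n=n$ and $\alpha_n=C_m x_n/\sqrt n$ rather than your $B_n=1$, $\alpha_n=cx_n$, but that is only a rescaling). The first inequality of the proposition, removing the mean shift via \eqref{barest} and \eqref{XY}, is also handled the same way.

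The gap is in what you flag as the ``delicate point.'' You record only $\beta_n=O(1/x_n)$, which makes the prefactor $\exp\{c_{20}\beta_n(c_{17}^3\alpha_n^3 B_n^{3/2}+1)\}$ of order $\exp\{\Theta(x_n^2)\}$; and you cannot shrink $\alpha_n$ to kill this, because Lemma \ref{Einmahl} only covers $x\le c_{17}\alpha_n B_n$, i.e.\ you need $\alpha_n\gtrsim x_n$ (in your scaling). So with $\beta_n$ merely $O(1/x_n)$ the prefactor genuinely competes with the Gaussian term and the $\varepsilon$-slack does not rescue you; the dependence of $c_{17},c_{20}$ on $m$ is irrelevant because $m$ is fixed throughout. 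The resolution is the third-moment estimate you omitted: by \eqref{Egpx} and \eqref{nznl}, $E|\bar g_l(X_i)|^3=o(z_{n,l}L_l(z_{n,l}))$, which combined with H\"older (and the uniform bound $\|\tilde\xi_{n,i}\|\le A_n=O(1/x_n)$, so $\exp(\alpha_n\|\tilde\xi_{n,i}\|)=O(1)$) gives
\[
\sum_{i=1}^n E\bigl\{\|\tilde\xi_{n,i}\|^3\exp(\alpha_n\|\tilde\xi_{n,i}\|)\bigr\}=o(1/x_n),
\]
hence $\beta_n=o(1/x_n)$ and the prefactor is $e^{o(x_n^2)}$. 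This is precisely what the paper establishes (its \eqref{K4.32f} and \eqref{K4.37h}); without it the proposed proof does not close.

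A minor point: the factorization $\Sigma^{-1/2}=R_n^{-1/2}D_n^{-1}/\sqrt n$ you invoke to bound $\|\tilde\xi_{n,i}\|$ is not a valid matrix identity ($D_n$ and $R_n$ need not commute). What you should use is the quadratic-form identity $\xi^T\Sigma^{-1}\xi = n^{-1}(D_n^{-1}\xi)^T R_n^{-1}(D_n^{-1}\xi)\le n^{-1}\lambda_{\max}(R_n^{-1})\|D_n^{-1}\xi\|^2$, which gives the same $A_n=O(\sqrt m/x_n)$; the paper's route is the equivalent variational identity \eqref{identity} applied with $\boldsymbol{A}=\Sigma$.
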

\begin{proof}
For each $1 \leq i \leq n$, let
\begin{eqnarray*} \label {Gi}
\boldsymbol{G}_{n,i} = \Big (\sqrt{\lambda_1} (\bar{g}_1(X_i) - E\bar{g}_1(X_i) ), ..., \sqrt{\lambda_m}(\bar{g}_m (X_i) - E\bar{g}_m(X_i)) \Big )^T.
\end{eqnarray*}
Let $B_n=n$ and $\Sigma$ be the covariance matrix of $\boldsymbol{G}_{n,1}$. For $1 \leq i \leq n$, let
\begin{eqnarray}\label {K4.7a}
\xi_{n,i} = \Sigma^{-1/2} \boldsymbol{G}_{n,i}.
\end{eqnarray}
Then
\begin{eqnarray}
&& Cov \left ( \xi_{n,1}+ \cdots + \xi_{n,n}  \right ) \nonumber \\
&=& E \left \{\left (  \Sigma^{-1/2} (\boldsymbol{G}_{n,1} + \cdots + \boldsymbol{G}_{n,n})\right )  \left (  \Sigma^{-1/2} (\boldsymbol{G}_{n,1} + \cdots + \boldsymbol{G}_{n,n})\right )^T \right \} \nonumber\\
&=&   \Sigma^{-1/2} E \left \{\left (\boldsymbol{G}_{n,1} + \cdots + \boldsymbol{G}_{n,n})  (\boldsymbol{G}_{n,1} + \cdots + \boldsymbol{G}_{n,n}\right )^T \right \}\Sigma^{-1/2} \nonumber \\
&=& \Sigma^{-1/2} \sum_{1 \leq i , j \leq n} E \left \{ \boldsymbol{G}_{n,i} \boldsymbol{G}_{n,j}^T \right \} \Sigma^{-1/2}. \nonumber
\end{eqnarray}
Since the $X_i$'s are independent, then
\begin{eqnarray*}
Cov \left ( \xi_{n,1}+ \cdots + \xi_{n,n}  \right ) &=&  \Sigma^{-1/2} \sum_{i=1}^n E \left \{\boldsymbol{G}_{n,i} \boldsymbol{G}_{n,i}^T \right \}  \Sigma^{-1/2} \nonumber \\
& =& n I_m = B_n I_m.
\end{eqnarray*}
Hence condition (\ref{K4.1c}) in Lemma \ref{Einmahl} is satisfied. Let
\begin{eqnarray*}\label {alphan}
\alpha_n = \frac{C_m x_n}{n^{1/2}}
\end{eqnarray*}
where $C_m>0$ is a finite constant depending only on $m$.  We shall verify condition (\ref{K4.2d}). By (\ref{K4.7a}),
\begin{eqnarray} \label {K4.8a}
\| \xi_{n,i} \|^2 =  \left (  \Sigma^{-1/2} \boldsymbol{G}_{n,i} \right )^T \left (  \Sigma^{-1/2} \boldsymbol{G}_{n,i} \right ) =  \boldsymbol{G}_{n,i}^T \Sigma^{-1} \boldsymbol{G}_{n,i}.
\end{eqnarray}
Observe that $\Sigma$ is positive definite by assumption (\ref{a2}). Then by the identity
\begin{eqnarray} \label {identity}
\boldsymbol{x^T A ^{-1}x}=\max_{\| \vartheta\|=1} \frac{(x^T \vartheta)^2}{\vartheta^T \boldsymbol{A} \vartheta}
\end{eqnarray}
for any $m \times m$ postive definite matrix $\boldsymbol{A}$, we have
\begin{eqnarray} \label {K4.10a}
\| \xi_{n,i} \|^2 = \boldsymbol{G}_{n,i}^T \Sigma^{-1} \boldsymbol{G}_{n,i} = \max_{\|  \vartheta \|=1} \frac{(\boldsymbol{G}_{n,i}^T \vartheta )^2}{\vartheta^T \Sigma \vartheta}.
\end{eqnarray}
Let $\vartheta^* =(\vartheta^*_1, ..., \vartheta^*_m)$ such that $\|\vartheta^*  \|=1$ and $(\boldsymbol{G}_{n,i}^T \vartheta^* )^2= \max_{\|\vartheta \|=1}(\boldsymbol{G}_{n,i}^T \vartheta )^2$. Then for any $\vartheta =(\vartheta_1, ..., \vartheta_m) \in l^2$, by the Cauchy-Schwarz inequality,
\begin{eqnarray} \label {K4.11a}
(\boldsymbol{G}_{n,i}^T \vartheta )^2 &=& \bigg ( \sum_{l=1}^m  \sqrt{\lambda_l} (\bar{g}_l(X_i) - E\bar{g}_l(X_i) ) \vartheta_l  \bigg  )^2 \nonumber \\
 &=& \bigg ( \sum_{l=1}^m  \frac{ \bar{g}_l(X_i) - E\bar{g}_l(X_i) }{\sqrt{L_l(z_{n,l})}} \vartheta_l \sqrt{\lambda_l L_l(z_{n,l})} \bigg )^2 \nonumber \\
&\leq&  \sum_{l=1}^m  \frac{(\bar{g}_l(X_i) - E\bar{g}_l(X_i))^2}{L_l(z_{n,l})}\sum_{l=1}^m  \vartheta^2_l \lambda_l L_l(z_{n,l}) .
\end{eqnarray}
Since $E g_l(X_1)=0$ for all $l \geq 1$, then $E \bar{g}_l(X_1)=o(x_n \sqrt{L_l(z_{n,l})}/\sqrt{n})$ by (\ref{Egx}) and (\ref{nznl}). By assumption (\ref{a2}),
\begin{eqnarray*} \label {K4.13ab}
\vartheta^T \Sigma \vartheta &=& \sum_{1 \leq l , l' \leq m} \vartheta_l \vartheta_{l'}\sqrt{\lambda_l \lambda_{l'}} E \left(\bar{g}_l(X_1) - E\bar{g}_l(X_1) \right ) \left(\bar{g}_{l'}(X_1) - E\bar{g}_{l'}(X_1) \right ) \nonumber \\
&=&  \sum_{l=1}^m \vartheta_l^2 \lambda_l  E \bar{g}^2_l(X_1) -  \sum_{l=1}^m \vartheta_l^2 \lambda_l  (E \bar{g}_l(X_1))^2 \nonumber \\
&& + \sum_{1 \leq l \neq  l' \leq m} \vartheta_l \vartheta_{l'}\sqrt{\lambda_l \lambda_{l'}} E \bar{g}_l(X_1) \bar{g}_{l'}(X_1)   \nonumber \\
&& - \sum_{1 \leq l \neq  l' \leq m} \vartheta_l \vartheta_{l'}\sqrt{\lambda_l \lambda_{l'}}  E\bar{g}_l(X_1) E\bar{g}_{l'}(X_1) \nonumber \\
&=& \sum_{l=1}^m  \vartheta^2_l \lambda_l L_l(z_{n,l}) -\sum_{l=1}^m \vartheta_l^2 \lambda_l  \times o\left ( \frac{x_n^2 L_l(z_{n,l})}{n} \right ) \nonumber \\
&& +o(1)\sum_{1 \leq l \neq l' \leq m} \vartheta_l \vartheta_{l'} \sqrt{\lambda_l \lambda_{l'}L_l(z_{n,l})L_{l'}(z_{n,l'})} \nonumber\\
&& -  \sum_{1 \leq l \neq l' \leq m} \vartheta_l \vartheta_{l'} \sqrt{\lambda_l \lambda_{l'}} \times o\bigg ( \frac{x_n \sqrt{ L_l(z_{n,l})}}{\sqrt{n}} \bigg ) o\bigg ( \frac{x_n \sqrt{ L_{l'}(z_{n,l'})}}{\sqrt{n}} \bigg ). \nonumber \\
\end{eqnarray*}
By the Cauchy-Schwarz inequality,
\begin{eqnarray}\label {J4.31a}
 \sum_{1 \leq l \neq l' \leq m} \vartheta_l \vartheta_{l'} \sqrt{\lambda_l \lambda_{l'}L_l(z_{n,l})L_{l'}(z_{n,l'})}\leq m \sum_{l=1}^m \vartheta_l^2 \lambda_l L_l(z_{n,l}). \nonumber
\end{eqnarray}
Hence
\begin{eqnarray} \label {K4.13a}
\vartheta^T \Sigma \vartheta  = (1+o(1)) \sum_{l=1}^m  \vartheta^2_l \lambda_l L_l(z_{n,l})  .
\end{eqnarray}
Applying (\ref{K4.11a}) and (\ref{K4.13a}) to (\ref{K4.10a}), we have
\begin{eqnarray} \label {K4.29f}
\| \xi_{n,i} \|^2  \leq 2 \sum_{l=1}^m  \frac{(\bar{g}_l(X_i) - E\bar{g}_l(X_i))^2}{L_l(z_{n,l})}.
\end{eqnarray}
Since $|\bar{g}_l(X_i)| \leq z_{n,l} = \sqrt{nL_{l}(z_{n,l})}/x_n$ by (\ref{nznl}), and \eqref{barest}, we have
\begin{eqnarray}\label {K4.15a}
\| \xi_{n,i} \|^2 \leq \frac{4 m n}{x_n^2} .
\end{eqnarray}
By (\ref{K4.29f}),
\begin{eqnarray}\label {K4.31h}
E \|\xi_{n,i} \|^2 \leq 2  \sum_{l=1}^m  \frac{E(\bar{g}_l(X_i) - E\bar{g}_l(X_i))^2}{L_l(z_{n,l})}  \leq 2m
\end{eqnarray}
and
\begin{eqnarray} \label {K4.20a}
E \| \xi_{n,i} \|^3 &\leq& 2^{3/2} E \left (\sum_{l=1}^m\frac{  (\bar{g}_l(X_i) - E\bar{g}_l(X_i))^2}{L_l(z_{n,l})} \right )^{3/2}.
\end{eqnarray}
By H\"{o}lder's inequality,
\begin{eqnarray}\label {A2.57b}
 \bigg (\sum_{l=1}^m\frac{  (\bar{g}_l(X_i) - E\bar{g}_l(X_i))^2}{L_l(z_{n,l})} \bigg )^{3/2}\leq m^{1/2}  \sum_{l=1}^m \frac{ |\bar{g}_l(X_i) - E\bar{g}_l(X_i)|^3}{L_l^{3/2}(z_{n,l})}.
\end{eqnarray}
Combining (\ref{K4.20a}) and (\ref{A2.57b}), we have
\begin{eqnarray}\label {K4.32f}
E \| \xi_{n,i} \|^3&\leq&  2^{3/2} m^{1/2}\sum_{l=1}^m \frac{8  E|\bar{g}_l(X_i) |^3}{L_l^{3/2}(z_{n,l})} \nonumber \\
&=& \sum_{l=1}^m  \frac{o(z_{n,l} L_l(z_{n,l}))}{L_l^{3/2}(z_{n,l})} = o \left ( \frac{n^{1/2} }{x_n} \right )
\end{eqnarray}
by (\ref{Egpx}) and (\ref{nznl}). Since $B_n=n$ and $\alpha_n = C_m x_n/n^{1/2}$, then by (\ref{K4.15a}) and (\ref{K4.32f}), we have
\begin{eqnarray}\label {K4.34f}
&& \alpha_n \sum_{i=1}^n  E \left \{ \| \xi_{n,i} \|^3 \exp \left (  \alpha_n \|\xi_{n,i}\|\right ) \right \} \nonumber \\
&\leq&  \frac{C_m x_n}{n^{1/2}}  n\times  o \left ( \frac{n^{1/2} }{x_n} \right ) \exp \bigg ( \frac{C_m x_n}{n^{1/2}} \left (  \frac{4 m n }{x_n^2} \right )^{1/2} \bigg ) = o(n) = o(B_n). \nonumber
\end{eqnarray}
Hence condition (\ref{K4.2d}) in Lemma \ref{Einmahl} is satisfied. Similarly,
\begin{eqnarray}\label {K4.37h}
\beta_n &:=& B_n^{-3/2}  \sum_{i=1}^n E \left \{ \| \xi_{n,i} \|^3 \exp \left (  \alpha_n \|\xi_{n,i}\|\right ) \right \} \nonumber \\
&=& n^{-3/2} n  \times o \left ( \frac{n^{1/2} }{x_n} \right ) \exp \left (\frac{C_m x_n}{n^{1/2}}  \bigg (  \frac{4 m n }{x_n^2} \right )^{1/2} \bigg ) \nonumber \\
&=& o(1/x_n).
\end{eqnarray}
Then $\beta_n^2 \log (1/\beta_n)=o(1/x_n)$. By (\ref{K4.15a}), we have $\|\xi_{n,i}\| \leq (4 m n /x_n^2)^{1/2}:=A_n$. By (\ref{K4.31h}), we have $\sum_{i=1}^n E \|\xi_{n,i} \|^2\leq 2mn:=b_n^2$. Then by Lemma \ref{Einmahl} and (\ref{K4.37h}) with $B_n=n$ and $\alpha_n = C_m x_n/n^{1/2}$ for sufficiently large $C_m$, we have
\begin{eqnarray}\label {K4.36g}
&& P \left (\|S_n\| \geq  n^{1/2}x_n  \right) \nonumber \\
&\leq& \exp \left \{ o(x_n^2) \right \}  \bigg \{   \exp \left ( - \frac{(1-\gamma)^6  n x_n^2}{2 n }  \right )+ \exp \bigg ( - \frac{\gamma^3 (1-\gamma)^3  n x_n^2}{n \times o(1/x_n)}  \bigg ) \bigg \} \nonumber\\
&& +  2m \exp \left ( - \frac{(1-\gamma)^2 c^2_{17} C_m^2 x_n^2  n }{2 (2 m^3 n +m(4 m n /x_n^2)^{1/2} c_{17} C_m x_n n^{1/2})}   \right ) \nonumber \\
&\leq& \exp \left \{ o(x_n^2) \right \}  \left \{   \exp \left ( - \frac{ (1-\gamma)^6  x^2_n}{2}  \right ) + \exp \left ( -4 x_n^2  \right ) \right \} + \exp \left ( -4 x_n^2  \right ) \nonumber \\
&\leq& \exp \left ( -  \frac{(1-\gamma)^7  x^2_n }{2} \right ).\nonumber
\end{eqnarray}
Letting $\gamma=1-(1-\varepsilon)^{1/7}$, we have
\begin{eqnarray} \label {K4.38k}
P \left (\|S_n\| \geq  n^{1/2}x_n  \right) \leq \exp \left (- \frac{(1-\varepsilon) x_n^2}{2} \right ).
\end{eqnarray}
Similar to (\ref{K4.8a}),
\begin{eqnarray} \label {K4.38g}
\|S_n\|^2= \Big \| \sum_{i=1}^n\xi_{n,i}\Big \|^2 &=&  \Big  (  \Sigma^{-1/2} \sum_{i=1}^n\boldsymbol{G}_{n,i} \Big )^T \Big (  \Sigma^{-1/2} \sum_{i=1}^n\boldsymbol{G}_{n,i} \Big  ) \nonumber \\
&=& \Big ( \sum_{i=1}^n \boldsymbol{G}_{n,i} \Big )^T \Sigma^{-1}  \Big ( \sum_{i=1}^n \boldsymbol{G}_{n,i} \Big ).
\end{eqnarray}
We will use the identity (\ref{identity}) to estimate (\ref{K4.38g}). Let $\vartheta^* = (\vartheta^*_1, ..., \vartheta^*_m)$ be such that $\|\vartheta^*\|=1$ and
\begin{eqnarray}\label {K4.40f}
\Big ( \sum_{i=1}^n \boldsymbol{G}_{n,i} \Big )^T\vartheta^*= \max_{\|\vartheta\|=1}  \Big ( \sum_{i=1}^n \boldsymbol{G}_{n,i}  \Big)^T \vartheta.
\end{eqnarray}
Observe that
\begin{eqnarray}
\max_{\|\vartheta\|=1}  \Big ( \sum_{i=1}^n \boldsymbol{G}_{n,i}  \Big)^T \vartheta = \bigg (\sum_{l=1}^m \Big ( \sum_{i=1}^n \sqrt{\lambda_l}(\bar{g}_l(X_i)- E \bar{g}_l(X_i))  \Big )^2 \bigg)^{1/2}.
\end{eqnarray}
By (\ref{K4.13a}),
\begin{eqnarray}\label {K4.41f}
(\vartheta^*)^T \Sigma \vartheta^* &=& (1+o(1)) \sum_{l=1}^m (\vartheta^*_l)^2 \lambda_l L_l(z_{n,l}) \nonumber \\
&\leq& (1+o(1))\max_{1 \leq l \leq m}\lambda_l L_l(z_{n,l})
\end{eqnarray}
because $\|\vartheta^*\|=1$. By the identity (\ref{identity}) and by (\ref{K4.40f})-(\ref{K4.41f}),
\begin{eqnarray}\label {K4.42g}
\Big ( \sum_{i=1}^n \boldsymbol{G}_{n,i} \Big )^T \Sigma^{-1}  \Big ( \sum_{i=1}^n \boldsymbol{G}_{n,i} \Big )\geq \frac{\sum_{l=1}^m \left ( \sum_{i=1}^n \sqrt{\lambda_l}(\bar{g}_l(X_i)- E \bar{g}_l(X_i))  \right )^2 }{(1+\varepsilon)\max_{1 \leq l \le m}\lambda_l L_l(z_{n,l})}. \nonumber \\
\end{eqnarray}
By (\ref{K4.38k}), (\ref{K4.38g}) and (\ref{K4.42g}), with application of \eqref{barest} and \eqref{XY},
\begin{eqnarray}\label {K4.43g}
I_{4}&\le & P \bigg ( \frac{\sum_{l=1}^m \lambda_l \left ( \sum_{i=1}^n (\bar{g}_l(X_i)- E \bar{g}_l(X_i))  \right )^2 }{\max_{1 \leq l \leq m}\lambda_l L_l(z_{n,l})} \geq  (1-C_1\varepsilon)(1-2 \varepsilon)^3  nx_n^2  \bigg ) \nonumber \\
&\leq& P \bigg (  \Big ( \sum_{i=1}^n \boldsymbol{G}_{n,i} \Big )^T \Sigma^{-1}  \Big ( \sum_{i=1}^n \boldsymbol{G}_{n,i} \Big ) \geq    \frac{ (1-C_1\varepsilon)(1-2 \varepsilon)^3  nx_n^2 }{1+ \varepsilon} \bigg ) \nonumber \\
&=& P \left (  \|S_n\|^2 \geq    \frac{ (1-C_1\varepsilon) (1-2 \varepsilon)^3 nx_n^2}{1+\varepsilon}  \right )\nonumber\\
& \le& \exp \left ( -  \frac{ (1-C_1\varepsilon)  (1-2\varepsilon)^4x_n^2}{2(1+\varepsilon)}   \right ). \nonumber
\end{eqnarray}
\end{proof}
\bigskip

Since $\varepsilon$ is arbitrary, then the upper bound of Theorem \ref{moderate} follows from (\ref{m2.10}) and the estimates of $I_1$, $I_2$, $I_3$ and $I_4$.


\section {The lower bound of Theorem \ref{moderate}}

\noindent
Let $0<\varepsilon<1$ be sufficiently small. For $1 \leq m < \infty$ sufficiently large, By (\ref{a1a}), $\max_{1 \leq l < \infty}\lambda_l V^2_{n,l}=\max_{1 \leq l \leq m}\lambda_l V^2_{n,l} $. Together with (\ref{a1b}), we have
\begin{eqnarray}\label {3.1}
 &&P (W_n \geq (1-\varepsilon) x_n^2) \nonumber\\
 &=& P \left ( \frac{\sum_{l=1}^{\infty}\lambda_l \left (\left \{\sum_{i=1}^n g_l(X_i) \right \}^2  - \sum_{i=1}^n g_l^2(X_i)\right) }{\max_{1 \leq l < \infty} \lambda_l V^2_{n,l}} \geq (1-\varepsilon)x_n^2 \right) \nonumber\\
&\geq& P \left ( \frac{\sum_{l=1}^{m}\lambda_l  \left \{\sum_{i=1}^n g_l(X_i) \right \}^2   }{\max_{1 \leq l \leq m} \lambda_l V^2_{n,l}} \geq x_n^2 \right).
\end{eqnarray}

Let $\tilde{g}_l(X_i)$ be the random variable with distribution which is of the distribution of $g_l(X_i)$ conditioned on $|g_l(X_i)|\le z_{n,l}$. Define $\tilde{Y}_{n,l}=\sum_{i=1}^n\tilde{g}_l(X_i)$ and $\tilde{V}_{n,l}^2=\sum_{i=1}^n\tilde{g}_l^2(X_i)$. By the definition of $L_l(x)$ and \eqref {Pgxa}
\begin{align*}
&E\tilde{g}_l^2(X_i)=E\bar{g}_l^2(X_i)/P(|g_l(X_i)|\le z_{n,l})\\
&=L_l(z_{n,l})/P(|g_l(X_i)|\le z_{n,l})=L_l(z_{n,l})(1+o(1)).
\end{align*}
Notice that \eqref{Egx} implies $E\tilde{g}_{l}(X_1)=o(L_l(z_{n,l})/z_{n,l})$. Then we have
\begin{align}\label{sigmal}
&\sigma_l^2:=E(\tilde{Y}_{n,l}-E\tilde{Y}_{n,l})^2=nE(\tilde{g}_{l}(X_1)-E\tilde{g}_{l}(X_1))^2\notag\\
&=nE\tilde{g}_{l}^2(X_1)(1+o(1))=nL_l(z_{n,l})(1+o(1))
\end{align}
and
\begin{align*}
E\tilde{V}_{n,l}^2=nL_l(z_{n,l})(1+o(1)).
\end{align*}
Then for $0<\delta<1$,
\begin{eqnarray}
&&P \left ( \frac{\sum_{l=1}^{m}\lambda_l  \left \{\sum_{i=1}^n g_l(X_i) \right \}^2   }{\max_{1 \leq l \leq m} \lambda_l V^2_{n,l}} \geq x_n^2 \right)\notag\\
&\geq& P \left ( \frac{\sum_{l=1}^{m}\lambda_l  \left \{\sum_{i=1}^n g_l(X_i) \right \}^2   }{\max_{1 \leq l \leq m} \lambda_l V^2_{n,l}} \geq x_n^2, \max_{1\le i\le n}|g_l(X_i)|\le z_{n,l}, ,1\le l\le m\right)\notag\\
&=& P \left ( \frac{\sum_{l=1}^{m}\lambda_l  \left \{\sum_{i=1}^n \tilde{g}_l(X_i) \right \}^2   }{\max_{1 \leq l \leq m} \lambda_l \tilde{V}^2_{n,l}} \geq x_n^2\right)P\left(\max_{1\le i\le n}|g_l(X_i)|\le z_{n,l}, ,1\le l\le m\right)\notag\\
&\ge& P \left ( \frac{\sum_{l=1}^{m}\lambda_l  \left \{\sum_{i=1}^n \tilde{g}_l(X_i) \right \}^2   }{\max_{1 \leq l \leq m} \lambda_l \tilde{V}^2_{n,l}} \geq x_n^2, \tilde{V}^2_{n,l}\le (1+2\delta)\sigma^2_l, 1\le l\le m\right)\notag\\
&&\times P\left(\max_{1\le i\le n}|g_l(X_i)|\le z_{n,l}, ,1\le l\le m\right)\notag\\
&\ge&P \left ( \frac{\sum_{l=1}^{m}\lambda_l  \left \{\sum_{i=1}^n \tilde{g}_l(X_i) \right \}^2   }{\max_{1 \leq l \leq m} \lambda_l \sigma^2_l} \geq (1+2\delta) x_n^2\right)\notag\\
&&\times P\left(\max_{1\le i\le n}|g_l(X_i)|\le z_{n,l}, ,1\le l\le m\right)
-\sum_{l=1}^m P(\tilde{V}^2_{n,l}\ge (1+2\delta)\sigma^2_l).\label{summ}
\end{eqnarray}
Without loss of generality, assume that $\max_{1 \leq l \leq m} \lambda_l \sigma^2_l=\lambda_1 \sigma^2_1$. Then
\begin{align*}
&P \left ( \frac{\sum_{l=1}^{m}\lambda_l  \left \{\sum_{i=1}^n \tilde{g}_l(X_i) \right \}^2   }{\max_{1 \leq l \leq m} \lambda_l \sigma^2_l} \geq (1+2\delta) x_n^2\right)\\
&\ge P \left ( \frac{\lambda_1  \left \{\sum_{i=1}^n \tilde{g}_1(X_i) \right \}^2   }{ \lambda_1 \sigma^2_1} \geq (1+2\delta) x_n^2\right)\\
&\ge P \left (\sum_{i=1}^n \tilde{g}_1(X_i) \geq (1+2\delta)^{1/2}\sigma_1 x_n\right).
\end{align*}
Recall (\ref{nznl}) and (\ref{sigmal}). Take $c=1/x_n$, we have $|\tilde{g}_1(X_1)|\le z_{n,l}=c\sigma_l$.  Therefore, by Theorem 5.2.2 in Stout \cite{Stout}, for any $\gamma>0$,  we have
\begin{align}\label{yntail}
P \left (\sum_{i=1}^n \tilde{g}_1(X_i) \geq (1+2\delta)^{1/2}\sigma_1 x_n\right)\ge \exp(-(x_n^2/2)(1+2\delta)(1+\gamma)).
\end{align}
On the other hand,
\begin{align}
&P\left(\max_{1\le i\le n}|g_l(X_i)|\le z_{n,l}, 1\le l\le m\right)=[P\left(|g_l(X_1)|\le z_{n,l}, 1\le l\le m\right)]^n\notag\\
&=[1-P\left(|g_l(X_1)|\ge z_{n,l}, \exists 1\le l\le m\right)]^n\ge [1-\sum_{l=1}^mP\left(|g_l(X_1)|\ge z_{n,l}\right)]^n\notag\\
&\ge \exp(-2n \sum_{l=1}^mP\left(|g_l(X_1)|\ge z_{n,l}\right))=\exp(-o(x_n^2)).\label{maxg}
\end{align}

We apply the following exponential inequality (see Lemma 2.1, Cs\"{o}rg\H{o}, Lin and Shao \cite{CLS}, see also Griffin and Kuelbs \cite{GK} and Pruitt \cite{P}) for the rest of the proof.
\begin{lemma}\label{expineq}
Let $\xi, \xi_1, \cdots, \xi_n$ be i.i.d. random variables. Then for any $b, v, s>0$,
\begin{align*}
&P\left(\left|\sum_{i=1}^n (\xi_i I(|\xi_i|\le b)-E\xi_i I(|\xi_i|\le b))\right|\ge \frac{ve^v nE\xi_i^2 I(|\xi_i|\le b)}{2b}+\frac{sb}{v}\right)\le 2e^{-s}.
\end{align*}
By (\ref{Egpx}),
\begin{align}\label{power4}
E\tilde{g}^4_{1}(X_i)= o(z_{n,1}^2 L_1(z_{n,1})).
\end{align}
In Lemma \ref{expineq}, we take $\xi_i=\tilde{g}^2_{1}(X_i), s=x_n^2, b=z_{n,1}^2$ and $v=1/\delta$. Notice that $sb/v=\delta\sigma^2_1$  and $\frac{ve^v nE\xi_i^2 I(|\xi_i|\le b)}{2b}=o(\sigma^2_1)$ by (\ref{sigmal}) and (\ref{power4}). Then
\begin{align}
&P(\tilde{V}^2_{n,1}\ge (1+2\delta) \sigma^2_1)=P(\tilde{V}^2_{n,1}-E\tilde{V}^2_{n,1}\ge (1+2\delta)\sigma^2_1-E\tilde{V}^2_{n,1})\notag\\
&\le P\left(\sum_{i=1}^n (\tilde{g}^2_{1}(X_i)-E\tilde{g}^2_{1}(X_i))\ge \delta(1+\delta)\sigma^2_1\right)\notag\\
&\le 2\exp(-x_n^2). \label{Vntail}
\end{align}
Combining (\ref{3.1}), (\ref{summ}), (\ref{yntail}), (\ref{maxg}) and (\ref{Vntail}) and letting $\lambda, \delta\rightarrow 0$, we have
\begin{align*}
P (W_n \geq (1-\varepsilon) x_n^2) \ge \exp(-x_n^2/2).
\end{align*}
\end{lemma}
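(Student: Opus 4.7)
This is a classical Bernstein-type concentration inequality for a centered truncated sum of i.i.d.~random variables. My plan is to prove it by the standard Chernoff method applied to the truncated summands $\eta_i := \xi_i I(|\xi_i| \le b)$, which satisfy $|\eta_i| \le b$ and hence have a tractable moment generating function after centering.

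The key analytic input is the scalar bound $e^y - 1 - y \le y^2 e^{|y|}/2$ for every $y \in \mathbb{R}$, which I would verify term by term against the Taylor series. Setting $y = t \eta_i$ and taking expectations gives
$$
E \exp(t \eta_i) \;\le\; 1 + t\, E\eta_i + \tfrac{t^2}{2}\, E \eta_i^2\, e^{tb}
\;\le\; \exp\!\Bigl(t\, E\eta_i + \tfrac{t^2}{2}\, E \eta_i^2\, e^{tb}\Bigr),
$$
and then, by independence of the $\xi_i$'s,
$$
E \exp\!\Bigl(t \sum_{i=1}^n (\eta_i - E\eta_i)\Bigr) \;\le\; \exp\!\Bigl(\tfrac{n t^2}{2}\, E \eta_1^2\, e^{tb}\Bigr).
$$
Markov's inequality then yields, for any $x > 0$,
$$
P\!\Bigl(\sum_{i=1}^n (\eta_i - E\eta_i) \ge x\Bigr) \;\le\; \exp\!\Bigl(-t x + \tfrac{n t^2}{2}\, E \eta_1^2\, e^{tb}\Bigr).
$$

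To match the threshold appearing in the statement, I would take $t = v/b$ and $x = v e^v n\, E\eta_1^2/(2b) + s b /v$. A short calculation shows that the quadratic penalty $n t^2 E\eta_1^2 e^{tb}/2 = n v^2 E\eta_1^2 e^v/(2 b^2)$ exactly cancels half of $tx$, leaving the residual $-s$, so the one-sided tail is bounded by $e^{-s}$. Repeating the argument with $-\xi_i$ (equivalently, with $-t$) controls the lower tail, and a union bound supplies the factor of $2$. There is no real obstacle here; the only delicate point is that the exact constant $v e^v/(2 b)$ in the statement requires the sharp inequality $(e^v - 1 - v) \le v^2 e^v/2$ rather than a coarser estimate, and one must keep careful track of the centering so that the linear-in-$E\eta_i$ contribution is eliminated cleanly before optimizing in $t$.
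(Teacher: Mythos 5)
The paper does not prove this lemma; it is quoted verbatim from Cs\"{o}rg\H{o}, Lin and Shao (Lemma~2.1), with further pointers to Griffin--Kuelbs and Pruitt, so there is no internal proof to compare against. Your Chernoff-method argument is correct and is essentially the standard derivation behind all of those references: with $\eta_i=\xi_iI(|\xi_i|\le b)$ the pointwise bound $e^{y}-1-y\le \tfrac12 y^2e^{|y|}$ (checkable term by term for $y\ge0$ and by convexity for $y\le0$) and $|\eta_i|\le b$ give $Ee^{t(\eta_i-E\eta_i)}\le\exp(\tfrac{t^2}{2}E\eta_i^2e^{tb})$; choosing $t=v/b$ and the threshold $x=\tfrac{ve^v nE\eta_1^2}{2b}+\tfrac{sb}{v}$ makes the exponent in the Markov bound collapse to $-s$, and the two-sided bound follows by the same argument for $-\eta_i$ plus a union bound. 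One small wording slip: the quadratic penalty $\tfrac{nt^2}{2}E\eta_1^2e^{tb}$ cancels the contribution of the \emph{first} of the two terms in $x$ (after multiplying by $t$), not ``half of $tx$''; the two pieces of $tx$ are not equal in general, but the computation you carry out is exactly right. Note also that you have only re-derived the exponential inequality itself; the remainder of the quoted block (the substitution $\xi_i=\tilde g_1^2(X_i)$, $s=x_n^2$, $b=z_{n,1}^2$, $v=1/\delta$ and the deduction of \eqref{Vntail}) is separate routine bookkeeping belonging to the proof of the lower bound, which your write-up does not address.
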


\section{The upper bound of Theorem \ref{LIL}}


\begin{lemma}[Lemma 2.3 of Gin\a'{e}, Kwapie\a'{n}, Lata\l a, and Zinn \cite{GKLZ}] \label {GKLZ}
There exists a universal constant $C_3<\infty$ such that for any kernel $h$ and any two sequences of i.i.d. random variables, we have
\begin{eqnarray}
P \bigg ( \max_{k \leq m, l \leq n} \bigg | \sum_{i \leq k, j \leq l} h(X_i, Y_j)  \bigg |  \geq t\bigg )  \leq  C_3 P \bigg (  \bigg | \sum_{i \leq m, j \leq n} h(X_i, Y_j)  \bigg |  \geq t/C_3\bigg ) \nonumber
\end{eqnarray}
for all $m, n \in \mathbb{N}$ and all $t>0$.
\end{lemma}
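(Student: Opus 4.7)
The plan is to derive this bivariate L\'evy--Ottaviani-type maximal inequality by iterating a one-parameter maximal inequality in each of the two indices $k$ and $l$, exploiting the decoupled i.i.d.\ structure of $\{X_i\}$ and $\{Y_j\}$. The key observation is that as a process in $i$ (with $\{Y_j\}$ held fixed) or in $j$ (with $\{X_i\}$ held fixed), the partial sums $S_{k,l}=\sum_{i\le k,\,j\le l}h(X_i,Y_j)$ have conditionally i.i.d.\ increments, so the classical Montgomery--Smith maximal inequality can be applied in each coordinate and then unconditioned.

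Concretely, I would proceed in two stages. In Stage 1 (maximum in $k$), fix $Y_1,\ldots, Y_n$ and write $S_{k,n}=\sum_{i\le k} W_i$ with $W_i=\sum_{j\le n}h(X_i,Y_j)$. Conditionally on the $Y$'s the $W_i$ are i.i.d., so the Montgomery--Smith inequality gives a universal constant $A_1$ such that
\[
P\Bigl(\max_{k\le m}|S_{k,n}|\ge t \Big| Y_1,\ldots, Y_n\Bigr) \le A_1\,P\Bigl(|S_{m,n}|\ge t/A_1 \Big| Y_1,\ldots, Y_n\Bigr),
\]
and integrating out $Y$ yields the same bound unconditionally. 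In Stage 2 (joint maximum), fix $X_1,\ldots,X_m$ and view the process $j\mapsto \bigl(S_{k,j}\bigr)_{k\le m}$ as an $\ell^\infty(\{1,\ldots,m\})$-valued random walk whose $j$-th vector increment $\bigl(\sum_{i\le k} h(X_i,Y_j)\bigr)_{k\le m}$ is i.i.d.\ in $j$ conditionally on the $X$'s. The Banach-space version of the Montgomery--Smith inequality then furnishes a universal $A_2$ with
\[
P\Bigl(\max_{l\le n}\max_{k\le m}|S_{k,l}|\ge t \Big| X_1,\ldots, X_m\Bigr) \le A_2\,P\Bigl(\max_{k\le m}|S_{k,n}|\ge t/A_2 \Big| X_1,\ldots, X_m\Bigr).
\]
Integrating out $X$ and chaining with Stage 1 delivers the lemma with $C_3=A_1 A_2$.

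The main obstacle is the Banach-space step: L\'evy's classical inequality needs symmetry, and Ottaviani's bound, while valid without symmetry, does not directly give the clean form $P(\cdot\ge t)\le C\,P(\cdot\ge t/C)$. This is precisely what Montgomery--Smith's inequality provides, and crucially it remains valid for $\ell^\infty$-valued i.i.d.\ sums without any moment or symmetry hypothesis on $h$. An alternative route is to symmetrize via an independent Rademacher sequence, apply L\'evy's inequality to the symmetrized double sum in each coordinate, and de-symmetrize through a standard contraction argument at the cost of an additional universal factor; either approach only loses an absolute constant, so the resulting $C_3$ is indeed universal as stated.
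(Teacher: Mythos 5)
The paper itself does not prove this lemma: it is quoted verbatim as Lemma~2.3 of Gin\'e, Kwapie\'n, Lata\l a and Zinn, so there is no ``paper's own proof'' to compare against. That said, your two-stage argument is correct and is, in substance, the proof used in the original reference.

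A few remarks on the details. The conditional-independence observations are exactly right: fixing $Y_1,\dots,Y_n$, the row sums $W_i=\sum_{j\le n}h(X_i,Y_j)$ are i.i.d.\ in $i$; fixing $X_1,\dots,X_m$, the vector increments $\bigl(\sum_{i\le k}h(X_i,Y_j)\bigr)_{k\le m}$ are i.i.d.\ in $j$ and live in the finite-dimensional space $\ell^\infty(\{1,\dots,m\})$. Montgomery--Smith's comparison inequality is stated from the outset for i.i.d.\ Banach-space-valued summands, with no symmetry or integrability assumptions, so you do not actually need to distinguish a ``classical'' scalar version in Stage~1 from a ``Banach-space version'' in Stage~2: it is one theorem, applied once conditionally in each coordinate. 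Taking conditional expectations, then chaining the two inequalities, gives $P(\max_{k\le m,l\le n}|S_{k,l}|\ge t)\le A_2 A_1 P(|S_{m,n}|\ge t/(A_1A_2))$ and hence the lemma with $C_3=A_1A_2$. Your closing remark about why L\'evy/Ottaviani alone would not suffice and why Montgomery--Smith is the right tool is accurate; the symmetrization-plus-contraction alternative you sketch also works but costs extra absolute constants and is messier because medians must be controlled when de-symmetrizing a maximum.
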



\begin{proposition}\label {Proposition4.1}
Under the assumptions of Theorem \ref{moderate},
\begin{eqnarray*} \label {P4.1a}
\limsup_{n \rightarrow \infty} \frac{\sum_{l=1}^{\infty}\lambda_l \left (\sum_{i=1}^n g_l(X_i)   \right)^2}{\max_{1 \leq l < \infty} \lambda_l V^2_{n,l} \log \log n} \leq 2   \ \ a.s.
\end{eqnarray*}
Consequently,
\begin{eqnarray*}
\limsup_{n \rightarrow \infty} \frac{W_n}{  \log \log n} \leq 2   \ \ a.s.
\end{eqnarray*}
\end{proposition}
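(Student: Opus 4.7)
The ``consequently'' part is a free corollary: since $n(n-1)U_n=\sum_l\lambda_l Y_{n,l}^2-\sum_l\lambda_l V_{n,l}^2$, we have $W_n\le \sum_l\lambda_l Y_{n,l}^2/\max_l\lambda_l V_{n,l}^2$ pointwise, so dividing by $\log\log n$ and taking $\limsup$ transfers the main display to $W_n$. So the real job is to prove the first assertion.

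My plan is the classical Borel--Cantelli blocking argument at the LIL scale. First, I would plug the LIL choice $x_n^2=(2+\epsilon)\log\log n$ into Theorem \ref{moderate} (the hypotheses $x_n\to\infty$, $x_n=o(\sqrt n)$ being trivial), obtaining
\[
 P\bigl(W_n\ge (2+\epsilon)\log\log n\bigr)\le (\log n)^{-(1-\delta)(2+\epsilon)/2}
\]
for every preassigned $\delta>0$ and all large $n$; the same tail bound holds for $\sum_l\lambda_l Y_{n,l}^2/\max_l\lambda_l V_{n,l}^2$ in place of $W_n$, because the two differ by the a.s.\ bounded quantity $\sum_l\lambda_l V_{n,l}^2/\max_l\lambda_l V_{n,l}^2$ (bounded under (\ref{a1'}) via (\ref{a1b})). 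Fixing $\delta$ small enough that $\kappa:=(1-\delta)(2+\epsilon)/2>1$, I would pass to the geometric subsequence $n_k=\lfloor\theta^k\rfloor$ with $\theta>1$: since $\log\log n_k\sim\log k$, the probabilities at $n_k$ form a convergent $\sum k^{-\kappa}$ series, and Borel--Cantelli yields the subsequential bound eventually almost surely.

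To convert the subsequential bound into a bound for every $n$, I would interpolate over each block $[n_k,n_{k+1}]$. The denominator $D_n:=\max_l\lambda_l V_{n,l}^2$ is nondecreasing in $n$, and its ratio $D_{n_{k+1}}/D_{n_k}$ tends to $\theta$ a.s., which one obtains from the SLLN-type argument behind (\ref{HI21}) in Proposition \ref{proposition1}. For the numerator $N_n=\|\sum_{i=1}^n \boldsymbol g(X_i)\|_{\ell^2}^2$ I would invoke a maximal inequality, either L\'{e}vy--Ottaviani for $\ell^2$-valued sums, or Lemma \ref{GKLZ} combined with the de la Pe\~{n}a--Montgomery-Smith decoupling and the U-statistic estimates of Proposition \ref{supfV}. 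Combining the block bound on $\max_{n_k\le n\le n_{k+1}}N_n$ with the lower bound on $D_n$, the subsequential bound propagates to every $n$; sending $\epsilon\downarrow 0$ and $\theta\downarrow 1$ delivers the constant $2$.

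The main obstacle is that a crude L\'{e}vy-type maximal inequality inflates $\|S_n\|^2$ by a factor of order $4$, which would turn the correct LIL constant $2$ into something like $8$. The remedy I envisage is to split $\|S_n\|^2\le(1+\eta)\|S_{n_k}\|^2+(1+\eta^{-1})\|S_n-S_{n_k}\|^2$: the first summand is handled by the subsequential Borel--Cantelli bound, while the second is controlled on the short block $[n_k,n_{k+1}]$ by re-running Theorem \ref{moderate} (or Proposition \ref{supfV}) for the increments, where the block length $n_{k+1}-n_k\sim(\theta-1)n_k$ makes the contribution of order $(\theta-1)\log\log n_k\cdot D_{n_k}$, hence negligible as $\theta\downarrow 1$ and $\eta\downarrow 0$.
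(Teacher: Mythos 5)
Your plan follows essentially the same route as the paper: the reduction to the first display via the pointwise bound $W_n\le \sum_l\lambda_l Y_{n,l}^2/\max_l\lambda_l V_{n,l}^2$, the geometric blocking $n_k=[\theta^k]$ with $x_n^2=\log\log n$ and Borel--Cantelli, the split of $\max_{n<k\le\theta n}$ into the value at $n$ plus the increment over $(n,k]$ via a $(1+\eta)$/$(1+\eta^{-1})$ (i.e.\ inequality (\ref{XY})) decomposition, controlling the first term by the upper bound of Theorem \ref{moderate} and the increment via Proposition \ref{supfV} together with the maximal inequality of Lemma \ref{GKLZ} and decoupling, exploiting the short block length $\sim(\theta-1)n$.

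One step is asserted too strongly: you claim that $D_{n_{k+1}}/D_{n_k}\to\theta$ a.s.\ by an SLLN-type argument. In the domain-of-attraction setting $E g_l^2(X)$ may be infinite, so $V_{n,l}^2/n$ need not converge and a single heavy-tailed summand can distort the ratio over a block; neither an SLLN nor (\ref{HI21}) gives this a.s.\ limit. Fortunately the a.s.\ ratio convergence is not what is needed. Since $D_k$ is nondecreasing, it suffices to use $D_k\ge D_n$ on the block and a one-sided probabilistic lower bound for $D_n$ against $n\sum_l\lambda_l L_l(z_{n,l})$ with exception probability $\exp(-2x_n^2)$, which is exactly what (\ref{HI21}) of Proposition \ref{proposition1} supplies (and what the paper uses as $H_{2,1}$). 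Replacing the a.s.\ claim by that tail estimate, the rest of your outline goes through.
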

\begin{proof}
Let $x_n\rightarrow \infty$ as $n\rightarrow \infty$. Let $\theta>1$ with $\theta-1$ sufficiently small. For any positive integer $k \in (n, \theta n]$, by similar idea as in (\ref{XY}) with $0<\eta<1$,
\begin{eqnarray}\label {AC4.6}
&& P \Bigg (\max_{n < k \leq  \theta n} \frac{\sum_{l=1}^{\infty}\lambda_l \left (\sum_{i=1}^k g_l(X_i)   \right)^2}{\max_{1 \leq l < \infty} \lambda_l V^2_{k,l} } \geq 2 (1+\eta)^3 x_n^2 \Bigg) \nonumber \\
&\leq& P\Bigg (\frac{\sum_{l=1}^{\infty} \lambda_l   \big (\sum_{i=1}^{n} g_l (X_i)  \big )^2 }{\max_{1 \leq l < \infty}\lambda_l V^2_{n,l}} \geq 2(1-\eta) (1+\eta)^3x_{n}^2 \Bigg) \nonumber \\
&& + P\Bigg (\max_{n <  k \leq  \theta n}\frac{\sum_{l=1}^{\infty} \lambda_l   \big (\sum_{i=n+1}^k g_l (X_i)   \big )^2  }{\max_{1 \leq l < \infty} \lambda_lV^2_{k,l}} \geq \frac{\eta^2 (1+\eta)^3x_{n}^2}{2} \Bigg) \nonumber \\
&:=& H_1 + H_2.
\end{eqnarray}
Notice that \eqref{a1'} implies \eqref{a1}. By \eqref{a1b} and the upper bound of Theorem \ref{moderate},
\begin{eqnarray}\label {AC4.7}
H_1 \leq \exp \left (-(1-\eta)^{3/2} (1+\eta)^3 x^2_{n} \right ).
\end{eqnarray}
Let $0<\delta<1$ be a sufficiently small constant. By (\ref{XY}),
\begin{eqnarray}\label {H123}
&&H_2 \leq P \left (2m\max_{1 \leq l < \infty} \lambda_l V^2_{n,l} \leq (1-\eta) n\sum_{1 \leq l < \infty}\lambda_l L_l(z_{n,l})\right )  \nonumber \\
&& + P\Bigg ( \max_{n< k \leq \theta n}\frac{\sum_{l=1}^{\infty} \lambda_l \left ( \sum_{i=n+1}^k g_l(X_i) I \left (|g_l(X_i)| > \delta \eta z_{n,l} \right )  \right )^2}{ \max_{1 \leq l < \infty}\lambda_l V^2_{k,l}}  \geq \frac{\eta^4 (1+\eta)^3 x_{n}^2}{2} \Bigg ) \nonumber \\
&& + P \Bigg (  \frac{ \max_{n<  k \leq \theta n}\sum_{l=1}^{\infty} \lambda_l \left (\sum_{i=n+1}^k  g_{l}(X_i) I(|g_l(X_i) | \leq \delta \eta z_{n,l}) \right )^2 }{ n\sum_{1 \leq l < \infty}  \lambda_lL_l(z_{n,l}) }   \nonumber\\
&& \geq \frac{(1-\eta)(1-2 \eta) \eta^2  (1+\eta)^3x_{n}^2}{4m}\Bigg ) \nonumber \\
&&:= H_{2,1} +H_{2,2} + H_{2,3}.
\end{eqnarray}
By \eqref{HI21} in Proposition \ref{proposition1},
\begin{eqnarray}\label{H21}
H_{2,1} \leq \exp \big (-2 x^2_{n}\big).
\end{eqnarray}
By Cauchy-Schwarz inequality, for each $k$,
\begin{eqnarray} \label {4.7}
&& \frac{\sum_{l=1}^{\infty} \lambda_l \big \{  \sum_{i=n+1}^{k} |g_l(X_i)| I \left (|g_l(X_i)| > \delta \eta z_{n,l} \right )  \big \}^2}{\max_{1 \leq l <\infty}\lambda_l  V^2_{k,l}} \nonumber\\
&\leq&\frac{\sum_{l=1}^{\infty} \lambda_l  \sum_{i=n+1}^{k} g^2_l(X_i)\sum_{i=n+1}^{k}   I \left (|g_l(X_i)| > \delta \eta z_{n,l} \right )   }{\max_{1 \leq l <\infty}\lambda_l  V^2_{k,l}}.
\end{eqnarray}
By (\ref{a1b}), for some $m$ sufficiently large, the sum of the diagonal terms
\begin{eqnarray}\label {4.8}
\frac{\sum_{l=1}^{\infty} \lambda_l  \sum_{i=n+1}^{k} g^2_l(X_i)  I \left (|g_l(X_i)| > \delta \eta z_{n,l} \right )   }{\max_{1 \leq l <\infty}\lambda_l  V^2_{k,l}} \leq   m.
\end{eqnarray}
By (\ref{4.7}) and (\ref{4.8}),
\begin{eqnarray} \label {4.9}
&&H_{2,2} \leq P\Bigg  (\max_{n< k \leq \theta n} \frac{\sum_{n+1 \leq i \neq j \leq k}\sum_{l=1}^{\infty} \lambda_l   g^2_l(X_i)  I   (|g_l(X_j)| > \delta \eta z_{n,l}) }{\max_{1 \leq l <\infty}\lambda_l  \sum_{i=n+1}^{k} g^2_l(X_i)} \geq \frac{\eta^5 (1+\eta)^3 x_{n}^2}{2} \Bigg )\nonumber\\
&\leq&P\Bigg  ( \max_{n< k \leq \theta n}\frac{\sum_{n+1 \leq i < j \leq k}\sum_{l=1}^{\infty} \lambda_l   g^2_l(X_i)  I   (|g_l(X_j)| >  \delta \eta z_{n,l}) }{\max_{1 \leq l <\infty}\lambda_l  \sum_{i=n+1}^{k} g^2_l(X_i)} \geq \frac{\eta^5 (1+\eta)^3 x_{n}^2}{4} \Bigg )\nonumber\\
&+&P\Bigg  ( \max_{n< k \leq \theta n}\frac{\sum_{n+1 \leq j< i \leq k}\sum_{l=1}^{\infty} \lambda_l   g^2_l(X_i)  I   (|g_l(X_j)| >  \delta \eta z_{n,l}) }{\max_{1 \leq l <\infty}\lambda_l  \sum_{i=n+1}^{k} g^2_l(X_i)} \geq \frac{\eta^5 (1+\eta)^3 x_{n}^2}{4} \Bigg )\nonumber\\
&\leq&P\Bigg  ( \max_{n< k \leq \theta n}\sum_{n+2\le j\le k}\frac{\sum_{n+1 \leq i < j }\sum_{l=1}^{\infty} \lambda_l   g^2_l(X_i)  I   (|g_l(X_j)| > \delta \eta  z_{n,l}) }{\max_{1 \leq l <\infty}\lambda_l  \sum_{n+1 \leq i < j } g^2_l(X_i)} \geq \frac{\eta^5 (1+\eta)^3 x_{n}^2}{4} \Bigg )\nonumber\\
&+&P\Bigg  ( \max_{n< k \leq \theta n}\sum_{n+1 \leq j< k}\frac{\sum_{j< i \leq k}\sum_{l=1}^{\infty} \lambda_l   g^2_l(X_i)  I   (|g_l(X_j)| > \delta \eta  z_{n,l}) }{\max_{1 \leq l <\infty}\lambda_l  \sum_{j< i \leq k} g^2_l(X_i)} \geq \frac{\eta^5 (1+\eta)^3 x_{n}^2}{4} \Bigg )\nonumber\\
&=&H_{2,2,1}+H_{2,2,2}.
\end{eqnarray}
Let
\begin{eqnarray*}
\phi_j = \frac{\sum_{n+1 \leq i < j}\sum_{l=1}^{\infty} \lambda_l   g^2_l(X_i)  I   (|g_l(X_j)| > \delta \eta  z_{n,l}) }{ \max_{1 \leq l <\infty}\lambda_l \sum_{n+1 \leq i < j } g^2_l(X_i)} .
\end{eqnarray*}
Then for any constant $t>0$,
\begin{eqnarray}\label {4.11}
H_{2,2,1} &\leq& \Bigg  ( \sum_{n+2\le j\le [\theta n]}\frac{\sum_{n+1 \leq i < j }\sum_{l=1}^{\infty} \lambda_l   g^2_l(X_i)  I   (|g_l(X_j)| > \delta \eta  z_{n,l}) }{\max_{1 \leq l <\infty}\lambda_l  \sum_{n+1 \leq i < j } g^2_l(X_i)} \geq \frac{\eta^5 (1+\eta)^3 x_{n}^2}{4} \Bigg )  \nonumber\\
&\leq& E e^{t \sum_{j=n+2}^{[\theta n]} \phi_j}e^{-t \eta^5 (1+\eta)^3 x_n^2/4}.
\end{eqnarray}
Let $E_j$ be the expectation of $X_j$ for $n+2 \leq j \leq [\theta n]$. Then
\begin{eqnarray}  \label {4.12}
E e^{t \sum_{j=n+2}^{[\theta n]} \phi_j}=E (e^{t \sum_{j=n+2}^{[\theta n]-1} \phi_j}E_{[\theta n]} e^{ t \phi_{[\theta n]}}).
\end{eqnarray}
Since $|e^s-1| \leq e^{0 \vee s}|s|$ for any $s \in R$ and $0 \leq \phi_{[\theta n]} \leq m$ for some $m$ sufficiently large, then
\begin{eqnarray*}
  \left |E_{[\theta n]} e^{t \phi_{[\theta n]}} - 1 \right | &\leq& e^{m t} t E_{[\theta n]} \phi_{[\theta n]} \nonumber\\
&=& \frac{e^{m t} t \sum_{n+1 \leq   i < [\theta n]}\sum_{l=1}^{\infty} \lambda_l   g^2_l(X_i)   P   (|g_l(X_{[\theta n]})| >\delta \eta  z_{n,l}) }{ \max_{1 \leq l <\infty}\lambda_l \sum_{ n+1 \leq   i <[\theta n]} g^2_l(X_i)}.\nonumber\\
\end{eqnarray*}
By (\ref{Pgxa}) and (\ref{nznl}), we have $P   (|g_l(X_{[\theta n]})| >\delta \eta  z_{n,l}) =o(x_n^2/n)$. Then together with (\ref{a1b}),
\begin{eqnarray} \label {4.14}
E_{[\theta n]} e^{t   \phi_{[\theta n]}} = 1 + o(x_n^2/n) = e^{o(x_n^2/n)}.
\end{eqnarray}
Applying (\ref{4.14}) to (\ref{4.12}), we have
\begin{eqnarray*}
E e^{t \sum_{j=2}^{[\theta n]} \phi_j}= e^{o(x_n^2/n)} E e^{t \sum_{j=n+2}^{ [\theta n]-1} \phi_j} .
\end{eqnarray*}
Similarly,
\begin{eqnarray*}
E e^{t \sum_{j=n+2}^{[\theta n]-1} \phi_j}&=& E (e^{t \sum_{j=n+2}^{[\theta n]-2} \phi_j}E_{n-1} e^{t\phi_{[\theta n]-1}})\nonumber\\
&=& e^{o(x_n^2/n)} E e^{t\sum_{j=n+2}^{[\theta n]-2} \phi_j} .
\end{eqnarray*}
Continue this process from $X_{[\theta n]}$ to $X_{n+1}$, we conclude
\begin{eqnarray} \label {4.17}
E e^{t \sum_{j=n+2}^{ [\theta n]} \phi_j}= e^{[(\theta-1) n] \times o(x_n^2/n)} =e^{o(x^2_n)} .
\end{eqnarray}
Applying (\ref{4.17}) to (\ref{4.11}) and letting $t=8/(\eta^5 (1+\eta)^3)$, we have
\begin{eqnarray}\label{4.18}
H_{2,2,1} \leq \exp (-2x_n^2).
\end{eqnarray}
To estimate $H_{2,2,2}$, let
\begin{eqnarray*}\label {4.20}
\psi_{j, k} = \frac{\sum_{j < i \le k}\sum_{l=1}^{\infty} \lambda_l   g^2_l(X_i)  I   (|g_l(X_j)| > \delta \eta  z_{n,l}) }{ \max_{1 \leq l <\infty}\lambda_l \sum_{j < i \le k} g^2_l(X_i)} .
\end{eqnarray*}
Then for any constant $t>0$,
\begin{eqnarray}\label {4.21}
H_{2,2,2} &\le& P \Bigg  ( \sum_{n+1\le j < [\theta n]}\max_{j< k \leq \theta n}\psi_{j, k} \geq \frac{\eta^5 (1+\eta)^3 x_{n}^2}{4} \Bigg )  \nonumber\\
&\leq& E e^{t \sum_{j=n+1}^{[\theta n]-1} \max_{j< k \leq \theta n} \psi_{j, k}}e^{-t \eta^5 (1+\eta)^3 x_n^2/4}.
\end{eqnarray}
Let $E_j$ be the expectation of $X_j$ for $n+1 \leq j \leq [\theta n]$. Note $k > j$. Then
\begin{eqnarray} \label {4.22}
&&E e^{t \sum_{j=n+1}^{[\theta n]-1} \max_{j< k \leq \theta n}\psi_{j,k}}\notag\\
&&=E (e^{t \sum_{j=n+2}^{[\theta n]-1}\max_{j< k \leq \theta n}  \psi_{j, k}}E_{n+1} e^{ t   \max_{n+1< k \leq \theta n} \psi_{n+1, k}}).
\end{eqnarray}
Observe that
\begin{eqnarray}\label {4.201}
\psi_{j, k} = \frac{\sum_{l=1}^{\infty} \lambda_l  \left(\sum_{j < i \le k}g^2_l(X_i)\right)  I   (|g_l(X_j)| > \delta \eta  z_{n,l}) }{ \max_{1 \leq l <\infty}\lambda_l \sum_{j < i \le k} g^2_l(X_i)} .
\end{eqnarray}
Then by (\ref{a1b}), $0 \leq \psi_{n+1,k} \leq m$ for some $m$ sufficiently large.
Since $|e^s-1| \leq e^{0 \vee s}|s|$ for any $s \in R$,
\begin{eqnarray} \label {4.23}
 && \left |E_{n+1} e^{t  \max_{n+1< k \leq \theta n}\psi_{n+1,k}} - 1 \right | \leq e^{m t} t E_{n+1}  \max_{n+1< k \leq \theta n}  \psi_{n+1, k}.
\end{eqnarray}

Under assumption \eqref{a1'},  for each  $l \in [1, \infty)$,
\begin{eqnarray*}
\lambda_l V^2_{n,l} = \sum_{i=1}^n \lambda_l g_l^2(X_i) \leq c_l \sum_{\nu=1}^{\infty} \sum_{i=1}^n \lambda_\nu g_\nu^2(X_i)=  c_l \sum_{\nu=1}^{\infty} \lambda_\nu V^2_{n,\nu}.
\end{eqnarray*}
Recall that (\ref{a1b}),
then for each $l \in [1, \infty)$,
\begin{eqnarray*}
\frac{\lambda_l V^2_{n,l}}{\max_{1 \leq l <\infty}\lambda_l V^2_{n,l}}\leq \frac{m c_l }{1-\varepsilon}.
\end{eqnarray*}
Hence by \eqref{4.201},
\begin{eqnarray*}
\psi_{j,k}
\leq \frac{m}{1-\varepsilon} \sum_{l=1}^{\infty}c_l  I   (|g_l(X_j)| > \delta \eta  z_{n,l}).
\end{eqnarray*}
Then
\begin{eqnarray*}
E_{n+1} \max_{n+1< k \leq \theta n} \psi_{n+1, k} \leq  \frac{m}{1-\varepsilon} \sum_{l=1}^{\infty}c_l  P   (|g_l(X_{n+1})| > \delta \eta  z_{n,l}).
\end{eqnarray*}
By (\ref{Pgxa}) and (\ref{nznl}), we have $P   (|g_l(X_{n+1})| >\delta \eta  z_{n,l}) =o(x_n^2/n)$. Then together with  \eqref{a1'},
\begin{eqnarray}\label {4.25}
E_{n+1} \max_{n+1< k \leq \theta n}\psi_{n+1, k}=o(x_n^2/n).
\end{eqnarray}
Then by (\ref{4.23}) and (\ref{4.25}),
\begin{eqnarray*}
E_{n+1} e^{t  \max_{n+1< k \leq \theta n}  \psi_{n+1,k}} = 1 + o(x_n^2/n) = e^{o(x_n^2/n)}.
\end{eqnarray*}
Continue this process from $j=n+2$ to $j=[\theta n]-1$ and by (\ref{4.22}),
\begin{eqnarray}\label {4.26}
E e^{t \sum_{j=n+1}^{[\theta n]-1} \max_{j< k \leq \theta n} \psi_{j, k}}=e^{o(x_n^2)}.
\end{eqnarray}
Applying (\ref{4.26}) to (\ref{4.21}) and letting $t=8/(\eta^5 (1+\eta)^3)$, we have
\begin{eqnarray}\label {4.28}
H_{2,2,2} \leq \exp (-2x_n^2).
\end{eqnarray}
By (\ref{4.9}), (\ref{4.18}) and (\ref{4.28}),
\begin{eqnarray}\label {H22}
H_{2,2} \leq 2 \exp (-2x_n^2).
\end{eqnarray}
By the definition of $H_{2,3}$ in (\ref{H123}), and by Lemma \ref{GKLZ}, there is a constant $0<C' < \infty$ such that
\begin{eqnarray}
H_{2,3}\leq C' P \Bigg (  \frac{ \sum_{l=1}^{\infty} \lambda_l \big (\sum_{i=n+1}^{[\theta n]}  g_{l}(X_i)I(|g_l(X_i) | \leq \delta \eta z_{n,l})   \big )^2 }{ n  \sum_{l=1}^{\infty} \lambda_l L_l(z_{n,l}) } \geq \frac{(1-3\eta)\eta^2 x_{n}^2}{4mC'}\Bigg ). \nonumber
\end{eqnarray}
Similar to (\ref{I2.36}),
\begin{eqnarray}
H_{2,3}&\leq& C' P \Bigg  (  \frac{ \sum_{l=1}^{\infty} \lambda_l \left (\sum_{i=n+1}^{[\theta n]} \left \{g_{l}(X_i)I(|g_l(X_i) | \leq \delta \eta z_{n,l}) -E  g_{l}(X_i)I(|g_l(X_i) | \leq \delta \eta z_{n,l})   \right \} \right )^2 }{ n  \sum_{l=1}^{\infty}\lambda_l L_l(z_{n,l}) }\nonumber \\
&& \geq \frac{(1-3\eta)^2\eta^2 x_{n}^2}{4m C'}\Bigg ). \nonumber
\end{eqnarray}
By the decoupling version of (\ref{AC2.33}) in Proposition \ref{supfV},
\begin{eqnarray}\label {H23}
H_{2,3} \leq C' \exp \left (-2x^2_{n} \right).
\end{eqnarray}
Combining (\ref{H123}), \eqref{H21}, \eqref{H22} and \eqref{H23}, we have
\begin{eqnarray}\label {H4.10p}
H_2 \leq (3+ C') \exp \left (-2x^2_{n} \right).
\end{eqnarray}
By (\ref{AC4.6}), (\ref{AC4.7}) and (\ref{H4.10p}),
\begin{eqnarray*}
  &&P \Bigg (\max_{n < k \leq  \theta n} \frac{\sum_{l=1}^{\infty}\lambda_l \left (\sum_{i=1}^k g_l(X_i)   \right)^2}{\max_{1 \leq l < \infty} \lambda_l V^2_{k,l}  } \geq 2 (1+\eta)^3 x_n^2 \Bigg)\\
  &\leq& \exp \left (-(1-\eta)^{7/4} (1+\eta)^3 x^2_{n} \right ).
\end{eqnarray*}
Let $n=[\theta^j]$ for some $j\in \mathbb{N}$. We have
\begin{eqnarray}
  && P \Bigg (\max_{\theta^j < k \leq  \theta^{j+1}} \frac{\sum_{l=1}^{\infty}\lambda_l \left (\sum_{i=1}^k g_l(X_i)   \right)^2}{\max_{1 \leq l < \infty} \lambda_l V^2_{k,l} }\geq 2 (1+\eta)^3 x_{[\theta^j]}^2 \Bigg) \nonumber\\
  && \leq \exp \left (-(1-\eta)^{7/4} (1+\eta)^3 x^2_{[\theta^j]} \right ).\nonumber
\end{eqnarray}
 Let $x_n^2 =\log \log n$. Then
\begin{eqnarray*}
&& \sum_{j=1}^{\infty}  P \Bigg (\max_{\theta^j < k \leq  \theta^{j+1}} \frac{\sum_{l=1}^{\infty}\lambda_l \left (\sum_{i=1}^k g_l(X_i)   \right)^2}{\max_{1 \leq l < \infty} \lambda_l V^2_{k,l} \log \log k  } \geq 2 (1+\eta)^3   \Bigg) \nonumber\\
&\leq& \sum_{j=1}^{\infty}  P \Bigg (\max_{\theta^j < k \leq  \theta^{j+1}} \frac{\sum_{l=1}^{\infty}\lambda_l \left (\sum_{i=1}^k g_l(X_i)   \right)^2}{ \max_{1 \leq l < \infty}  \lambda_lV^2_{k,l} } \geq 2 (1+\eta)^3x_{[\theta^j]}^2  \Bigg) \nonumber\\
&\leq& \sum_{j=1}^{\infty}\exp \left (-(1-\eta)^{7/4} (1+\eta)^3 \log \log [\theta^j] \right ) \nonumber\\
&\leq& K \sum_{j=1}^{\infty}\exp \left (-(1-\eta)^2 (1+\eta)^3   \log j \right ) < \infty.
\end{eqnarray*}
By Borel-Cantelli lemma,
\begin{eqnarray*}
&& \limsup_{n \rightarrow \infty} \frac{\sum_{l=1}^{\infty}\lambda_l \big (\sum_{i=1}^n g_l(X_i) \big )^2}{\max_{1 \leq l < \infty}\lambda_l V^2_{n,l}\log \log n }  \leq \ 2 \ \ a.s.
\end{eqnarray*}
\end{proof}

\section{The lower bound of Theorem \ref{LIL}}


\begin{proof}
By the definition of $W_n$,
\begin{eqnarray*}
\frac{W_n}{\log \log n} &=& \frac{\sum_{l=1}^{\infty} \lambda_l \bigg (  \Big (\sum_{i=1}^n g_l (X_i)  \Big  )^2 - \sum_{i=1}^n g_l^2(X_i) \bigg )}{\max_{1 \leq l < \infty}\lambda_l V^2_{n,l} \log \log n}\nonumber\\
&=& \frac{\sum_{l=1}^{\infty} \lambda_l    \Big (\sum_{i=1}^n g_l (X_i)  \Big  )^2 }{ \max_{1 \leq l < \infty}\lambda_l V^2_{n,l} \log \log n}- \frac{\sum_{l=1}^{\infty} \lambda_l    V_{n,l}^2}{ \max_{1 \leq l < \infty}\lambda_l V^2_{n,l} \log \log n}.
\end{eqnarray*}
By (\ref{a1b}), \ $\sum_{l=1}^{\infty} \lambda_l    V_{n,l}^2/( \max_{1 \leq l < \infty}\lambda_l V^2_{n,l} \log \log n) \leq m/((1-\varepsilon)\log \log n) \rightarrow 0$ as $n \rightarrow \infty$, then
\begin{eqnarray*}
\frac{W_n}{\log \log n} \geq \frac{\sum_{l=1}^{\infty} \lambda_l    \Big (\sum_{i=1}^n g_l (X_i)  \Big  )^2 }{ \max_{1 \leq l < \infty}\lambda_l V^2_{n,l} \log \log n}.
\end{eqnarray*}
Then
\begin{eqnarray*}
\frac{W_n}{\log \log n} \geq \sum_{k=1}^\infty\frac{   \Big (\sum_{i=1}^n g_k (X_i)  \Big  )^2 }{  V^2_{n,k} \log \log n}I_{k=min\{j: \max_{1 \leq l < \infty}\lambda_l V^2_{n,l}=\lambda_j V^2_{n,j}\}}.
\end{eqnarray*}
Hence by (\ref{1.1aa}),
\begin{eqnarray*}
\limsup_{n \rightarrow \infty}\frac{W_n}{\log \log n}  \geq 2 \ \ a.s.
\end{eqnarray*}
\end{proof}



\begin{thebibliography}{99}
\bibitem{AL}
Adamczak, R. and  Lata\l a, R. (2008). The LIL for canonical {\it U}-statistics. {\it Ann. Probab.} \textbf{36}  1023-1068.

\bibitem{AG}
Arcones, M. and Gin\a'{e}, E. (1995). On the law of the iterated logarithm for canonical {\it U}-statistics and processes. {\it Stochastic Process. Appl.}  \textbf{58} 217-245.

\bibitem{BGT}
Bingham, N. H., Goldie, C. M. and Teugels, J. L. (1987). {\it Regular Variation}. Cambridge University Press, Cambridge.

%
\bibitem{CLS}
Cs\"{o}rg\H{o}, M., Lin, Z. Y. and Shao, Q. M. (1994). Studentized increments of partial sums. {\it Science in China}, Series A.  \textbf{37} 265-276.

\bibitem{D}
Dehling, H. (1989). Complete convergence of triangular arrays and the law of the iterated
logarithm for degenerate {\it U}-statistics. {\it Statist. Probab. Lett.} \textbf{7} 319-321.

\bibitem{DDP1}
Dehling, H., Denker, M. and Philipp, W. (1984). Invariance principles for von Mises and
{\it U}-statistics. {\it Z. Wahrsch. Verw. Gebiete.} \textbf{67} 139-167.

\bibitem{DDP2}
Dehling, H., Denker, M. and Philipp, W. (1986). A bounded law of the iterated logarithm for Hilbert space valued martingales and its application to {\it U}-statistics. {\it Probab. Theory
Related. Fields} \textbf{72} 111-131.

%
\bibitem {dMS95}
de la Pe\~{n}a, V. H. and Montgomery-Smith, S. J. (1995). Decoupling inequalities for the tail probabilities of multivariate {\it U}-statistics. {\it Ann. Probab.}  \textbf{23} 806-816.

\bibitem {dLS}
de la Pe\~{n}a, V. H., Lai, T. L. and Shao, Q. M. (2009). {\it Self-normalized Processes: Limit Theory and
Statistical Applications. Probability and Its Applications (New York)}. Berlin: Springer.

\bibitem{Einmahl}
Einmahl, U. (1989). Extensions of results of Koml\'{o}s, Major, and Tusn\'{a}dy to the multivariate case. { \it J. Mult. Anal.}  \textbf{28} 20-68.

\bibitem {GKLZ}
Gin\a'{e}, E., Kwapie\a'{n}, S. Lata\l a, R. and Zinn, J. (2001). The LIL for canonical {\it U}-statistics of order 2.   {\it Ann. Probab.} \textbf{29}  520-557.

\bibitem {GLZ}
Gin\a'{e}, E.,  Lata\l a, R.  and Zinn, J. (2000). Exponential and moment inequalities for {\it U}-statistics. In {\it High Dimensional Probability II.} 13-38. Birkh\"{a}user, Boston.

\bibitem {GZ}
Gin\a'{e}, E. and Zhang, C.-H. (1996). On Integrability in the LIL for Degenerate {\it U}-statistics.  {\it J. Theoret. Probab.} \textbf{9}  385-412.

\bibitem {GK}
Griffin, P. and Kuelbs, J. (1989). Self-normalized laws of the iterated logarithm. {\it Ann. Probab. } \textbf{17}  1571-1601.

\bibitem{Halmos}
Halmos, P. R. (1946). The theory of unbiased estimation. {\it Ann. Math. Statst.} \textbf{17} 34-43.

\bibitem {Hoeffding}
Hoeffding, W. (1948). A class of statistics with asymptotically normal distribution. {\it Ann. Math. Statst.} \textbf{19} 293-325.

\bibitem {JingShaoWang}
Jing, B.Y., Shao, Q. M. and Wang, Q. (2003). Self-normalized Cram\a'{e}r type large deviations for independent
random variables. {\it Ann. Probab.} \textbf{31}, 2167-2215.

\bibitem{LL}
Lin, Z. and Liu, W. (2009). On maxima of periodograms of stationary processes. {\it Ann. Statist.} \textbf{37} 2676-2695.

\bibitem{LS}
Liu, W. and Shao, Q. M. (2013). A Cram\a' er moderate deviation theorem for Hotelling's $T^2$-statistic with applications to global tests. {\it Ann. Statist.} \textbf{41}, 296-322.

%

\bibitem{P}
Pruitt, W. E. (1981). General one-sided laws of the iterated logarithm. {\it Ann. Probab.} \textbf{9} 1-48.

\bibitem{S}
Serfling, R. J. (1971). The law of the iterated logarithm for {\it U}-statistics and related von Mises
statistics. {\it Ann. Math. Statist.} \textbf{42} 1794.

\bibitem{Shao97}
Shao, Q. M. (1997). Self-normalized large deviations. {\it Ann. Probab.} \textbf{25} 285-328.

\bibitem{Shao99}
Shao, Q. M. (1999).  A Cram\a'{e}r type large deviation result for Student's t-statistic. {\it J. Theoret. Probab.} \textbf{12}, no. 2, 385-398.

\bibitem{ShaoZhou}
Shao, Q. M. and Zhou, W. X. (2016). Cram\a'{e}r type moderate deviation theorems for self-normalized processes. {\it Bernoulli} \textbf{22}, no. 4, 2029-2079.

\bibitem{Stout}
Stout, W. F. (1974). {\it Almost Sure Convergence}. Academic Press, New York.

\bibitem{T}
Teicher, H. (1995). Moments of randomly stopped sums revisited. {\it J. Theoret. Probab.}  \textbf{8} 779-794.


\end{thebibliography}
\end{document}